\theoremstyle{definition}
\newtheorem{definition}{Definition}[section]
\theoremstyle{remark}
\newtheorem{remark}[definition]{Remark}
\theoremstyle{theorem}
\newtheorem{theorem}[definition]{Theorem}
\theoremstyle{corollary}
\newtheorem{corollary}[definition]{Corollary}
\theoremstyle{lemma}
\newtheorem{lemma}[definition]{Lemma}
\theoremstyle{example}
\newtheorem{example}[definition]{Example}
\theoremstyle{prop}
\newtheorem{prop}[definition]{Proposition}
\author{}
\begin{document}
\title{Some quadratic inequalities on product varieties}

\author{Yucheng Liu}

\address{Beijing International Center for Mathematical Research, Peking University, No.5 Yi-
heyuan Road Haidian District, Beijing, 100871, P.R.China}
\email{liuyucheng@bicmr.pku.edu.cn}

\keywords{Bridgeland stability conditions, Bogomolov inequalities, product varieties}

\subjclass[2010]{14F08, 14J40, 18E99}
\maketitle 
\begin{abstract}
	Given a (weak) stability condition $\sigma$ on a smooth projective variety X, we can construct a family of stability conditions $\sigma_t$ on $X\times S$ for any smooth projective variety $S$ and any real number $t\geq 0$ as in \cite{Stabilityconditionsonproductvarieties}. In this paper, we prove that any $\sigma_t$ semi-stable object on $X\times S$ satisfies a sequence of quadratic inequalities. If $\sigma$ is the classical slope stability condition, we get sequences of quadratic inequalities for slope semi-stable objects on product varieties. The leading quadratic inequality can be viewed as a weak version of Bogomolov's inequality holding in arbitrary characteristics. 
	
\end{abstract}
	\section{Introduction}

		Let $X$ be a smooth projective surface defined over an algebraically closed field of characteristic $0$, Bogomolov proved his famous inequality in \cite{Bogomolovinequality}, saying that 
		
		$$\Delta(E)=ch_1(E)^2-ch_0(E)ch_2(E)\geq 0$$ for any slope semi-stable sheaf $E$. This can be easily generalized to higher dimensional case by the
		Mehta-Ramanathan restriction theorem. 
		
		In the positive characteristic case, while  Langer \cite{Langer} proved the same inequality holds for strongly $\sigma_H$ semi-stable sheaves (i.e., all the geometric Frobenius pull backs of $E$ are still $\sigma_H$ semi-stable), it is well known that Bogomolov's inequality fails in general (see \cite{Raynaud}).

	In this paper, we will establish sequences of quadratic inequalities of semi-stable objects in $D(X\times S)$, for $X$ and $S$ smooth projective varieties of positive dimension over an algebraically closed field $k$ of  any characteristic.

	Our strategy is following : given a (weak) stability condition $\sigma=(\mathcal{A},Z)$ on $X$, where $\mathcal{A}$ is a Noetherian heart of a bounded t-structure on the derived category $D(X)$ of coherent sheaves on $X$. Abramovich and Polishchuk constructed a global heart $\mathcal{A}_S$ for any complex smooth variety $S$ in \cite{APsheaves}, then Polishchuk refined their construction for any scheme $S$ of finite type over any field $k$ in \cite{polishchuk2007constant}. In \cite{Stabilityconditionsonproductvarieties}, we show that for any object $E\in\mathcal{A}_S$, there is a polynomial $L_E(n)$ naturally associated with $\sigma$. We will call $L_E(n)$ the complexified Hilbert polynomial of $E$ with respect to $\sigma$, we suppress its dependence on $\sigma$ in our notation.
	
 In Section \ref{quadratic inequalitties}, we use the global slicing constructed  in \cite{bayer2014mmp} and the semi-stable reduction sequence in \cite{Stabilityconditionsonproductvarieties} to prove some quadratic inequalities of the coefficients in the complexified Hilbert polynomial. In fact, if we write $L_E(n)$ as:  $$L_E(n)=a_r(E)n^r+a_{r-1}(E)n^{r-1}+\cdots +a_0(E)+i(b_r(E)n^r+b_{r-1}(E)n^{r-1}\cdots+b_0(E))$$ where $r= dim(S)$ and $a_i, b_i$ are linear maps from the Grothendieck group $ K(D(X\times S))$ to $\mathbb{R}$ for any $0\leq i\leq r$. Then it is easy to show that the pair $\sigma_t=(\mathcal{A}_S,a_r-t\cdot b_{r-1}+ib_r)$ is a weak stability condition for any $t\in\mathbb{R}_{\geq 0}$.

 Now, we can state our main theorem in the following way. 
		
			\begin{theorem}\label{Bogomolov's inequality}
				
			If $E\in D(X\times S)$ is semi-stable with respect to $\sigma_t$, then we have the following quadratic inequalities.
				
				(1) $b_r(E)a_{r-1}(E)-b_{r-1}(E)a_r(E)\geq 0$.

				(2) For any $1\leq i\leq r-1$, if $$b_r(E)a_{r-1}(E)-b_{r-1}(E)a_r(E)=\cdots=b_r(E)a_{i}(E)-b_{i}(E)a_r(E)=0,$$ then $b_r(E)a_{i-1}(E)-b_{i-1}(E)a_r(E)\geq 0$.

		\end{theorem}
		
	In section 5, we apply this theorem on the classical slope stability condition. We get a sequence of quadratic inequalities for any slope semi-stable sheaves. Although the leading quadratic inequality is in general weaker than the classical Bogomolov inequality,  it holds in arbitrary characteristics.

		In summary, comparing our results with the  classical ones, there are some advantages and disadvantages. On one side, our approach only deals with product varieties and the leading quadratic inequality is in general weaker than the classical Bogomolov's inequality. On the other side, there are three advantages of our approach: firstly, our approach works in any characteristics; secondly, we give sequences of quadratic inequalities, hence it also provides some constraints of Chern characters in higher degrees; the last one is that our approach can be applied on any (weak) stability conditions, for example, slope stability conditions, tilt stability conditions and any Bridgeland stability conditions.

	\subsection*{Organization of the paper} In Section \ref{WSC}, we review some basic definitions and results in the theory of stability conditions. In Section \ref{Preliminary results}, we review some necessary results in \cite{APsheaves}, \cite{polishchuk2007constant} and \cite{Stabilityconditionsonproductvarieties}.  In Section \ref{quadratic inequalitties}, we use these results to establish a sequence of quadratic inequalities from any weak stability conditions.  Finally, we apply our results to classical slope stability conditions in Section \ref{Applications}.
	\subsection*{Notation and Conventions} In this paper, all varieties are integral separated algebraic schemes of finite type over an algebraically closed field $k$. We will use $D(X)$ rather than the usual notation $D^b(cohX)$ to denote the bounded derived categories of coherent sheaves on $X$. We set $\mathbb{H}=\{z\in\mathbb{C}\mid Im(z)>0\}$. We set $Im(z)$, $Re(z)$, and $Arg(z)$ to be the imaginary part, the real part, and the argument of a complex number $z$ respectively.
	\subsection*{Acknowledgement} I would like to thank Emanuele Macri and Hao Sun for many helpful discussions.
	\section{Weak Stability conditions}\label{WSC}
	The theory of stability conditions introduced by Bridgeland in \cite{bridgeland2007stability}, motivated by Douglas's work on D-branes and $\Pi$-stability \cite{douglas2002dirichlet}. This theory was further studied by Kontsevich and Soibelman in \cite{kontsevich2008stability}.
	In this section, we will review some basic notions in the theory of stability conditions (see \cite{beilinson1982faisceaux}, \cite{bridgeland2007stability}, \cite{kontsevich2008stability} and \cite{bayer2017stability}).
		
		The first notion is $t$-structures on triangulated categories, which was firstly introduced in \cite{beilinson1982faisceaux}. 
	\begin{definition}
		Let $\mathcal{D}$ be an triangulated category. A $t$-structure on $\mathcal{D}$ is a pair of full subcategories $(\mathcal{D}^{\leq 0},\mathcal{D}^{\geq 0})$ satisfying the condition (i), (ii) and (iii) below. We denote $\mathcal{D}^{\leq n}=\mathcal{D}^{\leq 0}[-n]$, $\mathcal{D}^{\geq n}=\mathcal{D}^{\geq 0}[-n]$ for every $n\in\mathbb{Z}$. Then the conditions are:
		
		(i) $Hom(E,F)=0$ for every $E\in\mathcal{D}^{\leq 0}$ and $F\in\mathcal{D}^{\geq 1}$;
		
		(ii) $\mathcal{D}^{\leq -1}\subset \mathcal{D}^{\leq 0}$ and  $\mathcal{D}^{\geq 1}\subset \mathcal{D}^{\geq 0}$.
		
		(iii) every object $E\in\mathcal{D}$ fits into an exact triangle 
		
		$$\tau^{\leq 0}E\rightarrow E\rightarrow \tau^{\geq 1}E\rightarrow \cdots$$ with $\tau^{\leq 0}E\in\mathcal{D}^{\leq 0}$, $\tau^{\geq 1}E\in\mathcal{D}^{\geq 1}$.
		
		The heart of the $t$-structure is $\mathcal{A}=\mathcal{D}^{\leq 0}\cap\mathcal{D}^{\geq 0}$. It is an abelian category (see \cite[Theorem 8.1.9]{hotta2007d}). The associated cohomology functors are defined by $H^0(E)=\tau^{\leq 0}\tau^{\geq 0}E$, $H^i(E)=H^0(E[i])$. We will also need the notation $\mathcal{D}^{[a,b]}=\mathcal{D}^{\leq b}\cap\mathcal{D}^{\geq a}$.
	\end{definition}
	
Combining this definition with Harder-Narasimhan filtrations, Bridgeland defined the notion of stability conditions on a triangulated category in \cite{bridgeland2007stability}.
	
	\begin{definition}\label{slicing}
		A stability condition $(\mathcal{P},Z)$  on a triangulated category $\mathcal{D}$ consists of a group homomorphism $Z:K(\mathcal{D})\rightarrow\mathbb{C}$ called the central charge, and full subcategories $\mathcal{P}(\phi)\in\mathcal{D}$ for each $\phi\in\mathbb{R}$, satisfying the following axioms:
		
		\par
		
		(a) if $E\in\mathcal{P}(\phi)$ is a nonzero object, then $Z(E)=m(E)exp(i\pi\phi)$ for some $m(E)\in\mathbb{R}_{>0}$,
		\par
		(b) for all $\phi \in \mathbb{R}$, $\mathcal{P}(\phi+1)=\mathcal{P}(\phi)[1]$,
		
		\par
		(c) if $\phi_1>\phi_2$ and $A_j\in\mathcal{P}(\phi_j)$ then $Hom_{\mathcal{D}}(A_1,A_2)=0$,
		
		\par

		(d) for every $0\neq E\in\mathcal{D}$ there exist a finite sequence of real numbers
		
		$$\phi_1>\phi_2>\cdots>\phi_m$$and a sequence of morphisms 
		
		$$0=E_0\xrightarrow{f_1}E_1\xrightarrow{f_2} \cdots \xrightarrow{f_m}E_m=E $$such that the cone of $f_j$ is in $\mathcal{P}(\phi_j)$ for all $j$.
	\end{definition}

	\begin{remark}

		If we allow $m(E)$ to be $0$ for $\phi\in\mathbb{Z}$ in (a), then the pair $(\mathcal{P},Z)$ is called a weak stability condition. In \cite{kontsevich2008stability}, the authors require the pair $(\mathcal{P},Z)$ to satisfy one extra condition (support property) to be a stability condition. We do not include this condition because it is not needed in this paper.
	\end{remark}
	   The data $\mathcal{P}$ of full subcategories $\mathcal{P}(\phi)$ is called a slicing on $\mathcal{D}$, a slicing can be viewed as a refined  $t$-structure on a triangulated category. Indeed, one can easily check that a slicing on $\mathcal{D}$ gives us a lot of $t$-structures on $\mathcal{D}$: for any $\phi\in\mathbb{R}$, we have a $t$-structure $(\mathcal{P}(>\phi-1),\mathcal{P}(\leq \phi))$ on $\mathcal{D}$.

In particular, a slicing $\mathcal{P}$ of $\mathcal{D}$ provides us a heart $\mathcal{P}(0,1]=\mathcal{P}(>0)\cap \mathcal{P}(\leq 1)$. Hence, a weak stability condition $(\mathcal{P},Z)$ gives us a pair $(\mathcal{A},Z)$, where $\mathcal{A}$ is an abelian category. This construction results in an equivalent definition of stability conditions.

\begin{definition}\label{second definition}
	A  stability condition on $\mathcal{D}$ is a pair $\sigma=(\mathcal{A},Z)$ consisting of the heart of a bounded t-structure $\mathcal{A}\subset\mathcal{D}$ and a group homomorphism $Z:K(A)\rightarrow \mathbb{C}$ such that  (a) and (b) below are satisfied:
	\par
	(a) (Positivity condition) For any nonzero object $E\in \mathcal{A}$, we have $Im(Z(E))\geq0$, with the property $Im(Z(E))=0 \implies  Re(Z(E))<0$. 
	
	\par
	
(b) (HN property) The function $Z$ allow us to define a slope for any object $E$ in the heart $\mathcal{A}$ by
	
	$$\mu_{\sigma}(E):=\begin{cases} -\frac{Re(Z(E))}{Im(Z(E))}\ &\text{if} \  Im(Z(E))> 0,\\ +\infty &\text{otherwise.} \end{cases}$$

	The slope function gives a notion of stability: A nonzero object $E\in \mathcal{A}$ is $\sigma$ semi-stable if for every proper subobject $F$, we have $\mu_{\sigma}(F)\leq\mu_{\sigma}(E)$.
	
	We require any object $E$ of $\mathcal{A}$ to have a Harder-Narasimhan filtration in $\sigma$ semi-stable ones, i.e., there exists a unique filtration $$0=E_0\subset E_1 \subset E_2\subset \cdots \subset E_{m-1} \subset E_m=E$$ such that $E_i/E_{i-1}$ is  $\sigma$ semi-stable and $\mu_{\sigma}(E_i/E_{i-1})>\mu_{\sigma}(E_{i+1}/E_i)$ for any $1\leq i\leq m$.

\end{definition}

\begin{remark}
	Similarly, we call $(\mathcal{A},Z)$ a weak stability condition if we allow $Z(E)=0$ for nonzero object $E\in\mathcal{A}$. 
	
	If the image of imaginary part of $Z$  is discrete in $\mathbb{R}$, and $\mathcal{A}$ is Noetherian, then the HN property is satisfied automatically. See \cite[Proposition 2.4]{bridgeland2007stability}, \cite[Lemma 4.9]{macri2017lectures}).
\end{remark}

	In this paper, we are  interested in the case when $\mathcal{D}$ is the bounded derived category of coherent sheaves on an algebraic variety $X$. From now on, $X$ will be a smooth projective variety over an algebraically closed field $k$, and $D(X)$ will be the bounded derived category of coherent sheaves on $X$. In this case, the construction of stability conditions is still a big open problem in this theory (for some special projective varieties, see \cite{Stabilityconditiononcurves}, \cite{bridgeland2008stability},  \cite{ABsurfaces}, \cite{baye2011bridgeland}, \cite{AbelianI}, \cite{AbelianII},  \cite{bayer2016space},  \cite{Macrifano}, \cite{Koseki},  \cite{li2018stability}, \cite{Lifano3fold} and \cite{Stabilityconditionsonproductvarieties}). However, we can always find weak stability conditions on $D(X)$.
	
	\begin{example}\label{slope stability conditions}
		Suppose $X$ is a smooth projective variety of dimension $d$, $H$ is an ample divisor on $X$. The  pair $\sigma_H=(cohX, -H^{d-1}ch_1(E)+iH^dch_0(E))$ is a weak stability condition on $D(X)$. Classically, the $\sigma_H$ semi-stable objects are called slope semi-stable with respect to $H$. Note that this pair is not a stability condition unless $X$ is a curve.
	\end{example}

	\section{Global heart and complexified Hilbert polynomial}\label{Preliminary results}
In this section, we will recall some constructions and results from \cite{APsheaves}, \cite{polishchuk2007constant}, \cite{bayer2014mmp} and \cite{Stabilityconditionsonproductvarieties}. We will work under the following setup in the rest of this paper. 
	
\textbf{Setup}: Suppose $X$ and $S$ are smooth projective varieties, and $\sigma=(\mathcal{A},Z)$ is a weak stability condition on $D(X)$, where $\mathcal{A}$ is Noetherian and the image of $Z$ is discrete. The global heart $\mathcal{A}$ corresponds to a $t$-structure $(D^{\leq 0}(X), D^{\geq 0}(X))$ on $D(X)$.

For any t-structure  $(D^{\leq 0}(X), D^{\geq 0}(X))$ on $D(X)$, we have the following theorem. 

\begin{theorem}[{\cite[Theorem 3.3.6]{polishchuk2007constant}}]
	
	 Suppose $S$ is a projective variety of dimension $r$, and $\mathcal{O}(1)$ is an ample line bundle on $S$. There exists a global t-structure on $D(X\times S)$ defined as 
		$$D^{[a,b]}(X\times S)=\{E\in D(X\times S)\mid \textbf{R}p_*(E \otimes q^*(\mathcal{O}(n)))\in D^{[a,b]}(X) \ for\ all \ n\gg 0\}.$$
		
	Here $a,b$ can be infinite. Moreover, the global heart $$\mathcal{A}_S=D^{\leq 0}(X\times S)\cap D^{\geq 0}(X\times S)$$ is Noetherian and independent of the choice of $\mathcal{O}(1)$.

\end{theorem}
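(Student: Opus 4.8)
The plan is to reduce every assertion about $D(X\times S)$ to the corresponding assertion about $D(X)$ through the twisted pushforward functor $\Phi_n := \mathbf{R}p_*(-\otimes q^*\mathcal{O}(n))$, exploiting relative ampleness to make $\Phi_n$ cohomologically well-behaved for $n\gg0$. First I would treat the two halves separately, defining $D^{\le b}(X\times S)$ and $D^{\ge a}(X\times S)$ by the conditions $\Phi_n(E)\in D^{\le b}(X)$, respectively $\Phi_n(E)\in D^{\ge a}(X)$, for all $n\gg0$, and then set $\mathcal{A}_S=D^{\le 0}(X\times S)\cap D^{\ge 0}(X\times S)$. The foundational step is a stabilization lemma: for a fixed bounded complex $E$, the cohomological amplitude of $\Phi_n(E)$ relative to the given t-structure on $X$ settles into a fixed range once $n$ is large. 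This rests on relative Serre vanishing, namely that for a coherent sheaf $\mathcal{F}$ on $X\times S$ one has $\mathbf{R}p_*(\mathcal{F}\otimes q^*\mathcal{O}(n))=p_*(\mathcal{F}\otimes q^*\mathcal{O}(n))$ for $n\gg0$, combined with a spectral sequence running over the finitely many cohomology sheaves of $E$ and the bounded amplitude of $\mathcal{A}$ on $\mathrm{Coh}(X)$. Once this is in place the clause ``for all $n\gg0$'' is unambiguous and the defining conditions are genuinely monotone in $a$ and $b$.

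Next I would verify the t-structure axioms. The nesting axiom is immediate, since $D^{\le b}(X)\subset D^{\le b+1}(X)$ passes through $\Phi_n$. For the truncation axiom, the idea is to construct $\tau^{\le 0}E$ and $\tau^{\ge 1}E$ on $X\times S$ so that $\Phi_n$ carries them to the truncations of $\Phi_n(E)$ downstairs; here I would use that the objects $p^*G\otimes q^*\mathcal{O}(-n)$ with $G\in\mathcal{A}$ and $n\gg0$ generate $D(X\times S)$, so the truncation functor can be assembled from the truncations on $X$ by a gluing/limit argument. The Hom-vanishing axiom is where the real work lies: given $E\in D^{\le 0}$ and $F\in D^{\ge1}$, I would compute $\mathrm{Hom}_{X\times S}(E,F)$ by resolving $E$ through building blocks $p^*G\otimes q^*\mathcal{O}(-n)$, for which the adjunction $p^*\dashv\mathbf{R}p_*$ and the projection formula give $\mathrm{Hom}_{X\times S}(p^*G\otimes q^*\mathcal{O}(-n),F)=\mathrm{Hom}_X(G,\mathbf{R}p_*(F\otimes q^*\mathcal{O}(n)))$; since $\mathbf{R}p_*(F\otimes q^*\mathcal{O}(n))\in D^{\ge1}(X)$ for $n\gg0$ while $G\in\mathcal{A}\subset D^{\le0}(X)$, each term vanishes by the t-structure on $X$. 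The delicate point is arranging the resolution so that every graded piece has its $X$-factor in $D^{\le0}(X)$.

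For the Noetherian property of $\mathcal{A}_S$, I would show that $\Phi_n$ reflects strict ascending chains: an infinite ascending chain of subobjects of $E\in\mathcal{A}_S$ would, after applying $\Phi_n$ for $n$ large enough that $\Phi_n$ is exact on the relevant subquotients (again by relative Serre vanishing), produce an ascending chain in $\mathcal{A}$, contradicting Noetherianity. The obstacle is that no single $n$ detects all the maps in the chain at once, so I would need a uniform choice of $n$, using the discreteness of the image of $Z$ together with the Noetherianity of $\mathcal{A}$ to force stabilization. Finally, for independence of $\mathcal{O}(1)$, I would compare two ample line bundles by observing that each is dominated by a twist of a power of the other, so the two ``$n\gg0$'' regimes are mutually cofinal and hence define the same subcategories. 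The main obstacle throughout is the Hom-vanishing axiom together with the uniform choice of $n$ in the Noetherian argument; both amount to promoting the pointwise-in-$n$ good behavior of $\Phi_n$ to behavior uniform across the finitely many, or ascending chain of, objects in play.
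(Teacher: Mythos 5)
The paper offers no proof of this statement: it is quoted verbatim from Polishchuk (Theorem 3.3.6 of \cite{polishchuk2007constant}), building on Abramovich--Polishchuk \cite{APsheaves}, so the only fair comparison is with the argument in those sources. There the t-structure is \emph{not} obtained by verifying the axioms directly from the condition on $\mathbf{R}p_*(E\otimes q^*\mathcal{O}(n))$. Instead one first extends the heart $\mathcal{A}$ to $X\times \mathrm{Spec}(A)$ for a Noetherian algebra $A$ (via the induced t-structure on the unbounded/quasi-coherent category and ind-objects), then glues these t-structures over an affine open cover of $S$, and only afterwards proves that the glued t-structure admits the displayed description in terms of twisted pushforwards; Noetherianity of $\mathcal{A}_S$ is a separate, substantial theorem in its own right. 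Your plan inverts this logic, taking the pushforward condition as the definition and trying to check the axioms directly.

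The genuine gap is the truncation axiom, which you dispatch with ``the truncation functor can be assembled from the truncations on $X$ by a gluing/limit argument.'' This is precisely the hard part: the truncations of $\Phi_n(E)$ for varying $n$ must be shown to arise from a single exact triangle upstairs, and nothing in your sketch produces that object or the compatibility of the downstairs truncations across $n$. Generation of $D(X\times S)$ by the objects $p^*G\otimes q^*\mathcal{O}(-n)$ does not by itself yield a t-structure whose aisle is the one you defined (generation arguments produce the aisle \emph{generated} by those objects, and identifying it with your pushforward-defined aisle is again the content of the theorem). A second problem is your Noetherianity argument: the statement concerns an arbitrary Noetherian heart and makes no reference to a central charge, so invoking ``discreteness of the image of $Z$'' imports a hypothesis that is not available here, and even with it you acknowledge but do not resolve the need for a single $n$ detecting every inclusion in an infinite ascending chain. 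The Hom-vanishing computation via adjunction and the projection formula is the one piece that matches the standard proof and works as you describe, provided the resolution by objects $p^*G\otimes q^*\mathcal{O}(-n)$ with $G\in D^{\leq 0}(X)$ is actually constructed rather than assumed.
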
 
\begin{remark}
	This theorem can be viewed as a generalization of Serre's vanishing. In fact, if $\mathcal{A}=coh(X)$ is the abelian category of coherent sheaves on $X$, then $\mathcal{A}_S=coh(X\times S)$ is the abelian category of coherent sheaves on $X\times S$.
\end{remark}
In \cite{Stabilityconditionsonproductvarieties}, we observed that $Z(\textbf{R}p_*(E\otimes \mathcal{O}(n)))$ is a polynomial of degree no more than $dim(S)=r$, whose leading coefficient is a weak stability function on $\mathcal{A}_S$. We denote this polynomial by $L_E(n)$ for any object $E$ in $\mathcal{A}_S$.  These can be stated as following.

\begin{theorem}[{\cite[Theorem 3.3]{Stabilityconditionsonproductvarieties}}]\label{glabal weak stablity condition}
	Assume $S$ is a smooth projective variety of dimension $r$, we define $(\mathcal{A}_S, Z_S)$ as below:
	\\
	$$\mathcal{A}_S=\{E\in D(X\times S)\mid \textbf{R}p_*(E \otimes q^*(\mathcal{O}(n)))\in \mathcal{A}\  for\ all\ n\gg0 \}$$
	$$Z_S(E)=\lim_{n\rightarrow +\infty}\frac{Z(\textbf{R}p_*(E\otimes q^*(\mathcal{O}(n)))r!}{n^r vol{(\mathcal{O}(1))}},$$where $vol(\mathcal{O}(1))$ is the volume of $\mathcal{O}(1)$. Then this pair is a weak stability condition on $D(X\times S)$.
\end{theorem}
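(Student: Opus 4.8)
The plan is to verify the three defining ingredients of a weak stability condition for the pair $(\mathcal{A}_S,Z_S)$. That $\mathcal{A}_S$ is the heart of a bounded (and Noetherian) $t$-structure on $D(X\times S)$ is already supplied by Polishchuk's theorem quoted above, so it remains to check that $Z_S$ is a group homomorphism satisfying the weak positivity condition, and that the Harder--Narasimhan property holds. The key idea, and the part I expect to be hardest, is positivity: I would identify $Z_S(E)$ with the value of the original central charge $Z$ on the restriction of $E$ to a general fibre $X\times\{s\}\cong X$, and then invoke positivity of $\sigma$ on $\mathcal{A}$.

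First I would record that for each fixed $n\gg0$ the assignment $E\mapsto L_E(n)=Z(\textbf{R}p_*(E\otimes q^*\mathcal{O}(n)))$ is a group homomorphism $K(\mathcal{A}_S)\to\mathbb{C}$, since $\textbf{R}p_*(-\otimes q^*\mathcal{O}(n))$ is exact and hence descends to $K$-theory, and $Z$ is a homomorphism. The observation preceding the theorem gives that $L_E(n)$ is a polynomial in $n$ of degree at most $r$; extracting the coefficient of $n^r$ is linear, so $Z_S$, being $r!/\mathrm{vol}(\mathcal{O}(1))$ times this coefficient, is again a homomorphism. Next I would pin down this leading coefficient by Grothendieck--Riemann--Roch: writing $h=c_1(\mathcal{O}(1))$, the only contribution to $n^r$ in $ch(\textbf{R}p_*(E\otimes q^*\mathcal{O}(n)))=p_*\big(ch(E)\,q^*e^{nh}\,\mathrm{td}(T_p)\big)$ comes from $q^*(n^r h^r/r!)=\tfrac{\mathrm{vol}(\mathcal{O}(1))}{r!}\,n^r\,[X\times\{s\}]$, and since the relative tangent bundle $T_p=q^*T_S$ restricts to a trivial bundle on $X\times\{s\}$, its Todd class drops out. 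Hence the coefficient of $n^r$ equals $\tfrac{\mathrm{vol}(\mathcal{O}(1))}{r!}\,ch(\textbf{L}i_s^*E)$, yielding the clean identity $Z_S(E)=Z\big([\textbf{L}i_s^*E]\big)$ for the class of the derived restriction to a general point $s\in S$.

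The crux is then the structural claim that for $E\in\mathcal{A}_S$ and general $s\in S$ the derived restriction $\textbf{L}i_s^*E$ lies in $\mathcal{A}$ itself, not merely in $D(X)$; this is the ``generic flatness'' property of the Abramovich--Polishchuk global heart, specializing when $\mathcal{A}=\mathrm{coh}(X)$ to ordinary generic flatness over $S$ of a coherent sheaf on $X\times S$. Granting it, positivity of $\sigma=(\mathcal{A},Z)$ applied to the genuine heart object $\textbf{L}i_s^*E\in\mathcal{A}$ gives $Im\,Z_S(E)\ge0$ together with the implication $Im\,Z_S(E)=0\Rightarrow Re\,Z_S(E)\le0$, i.e. weak positivity of $Z_S$. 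I expect this to be the main obstacle: without the identification one only knows $L_E(n)\in\overline{\mathbb{H}}\setminus\mathbb{R}_{>0}$ for $n\gg0$, which controls the genuine leading coefficient of $L_E$ but not the coefficient of $n^r$ when the imaginary part has strictly smaller degree, leaving a gap precisely in the boundary case $Im\,Z_S(E)=0$ (where $Re$ could a priori be positive). Rewriting $Z_S(E)$ as $Z$ of an actual object of $\mathcal{A}$ is exactly what removes this gap.

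Finally, for the Harder--Narasimhan property I would argue that $\mathcal{A}_S$ is Noetherian (Polishchuk's theorem) and that the image of $Z_S$ is discrete, the latter because $Z$ has discrete image and the coefficient of $n^r$ of $L_E$ is a fixed rational Lagrange-interpolation combination of finitely many values $L_E(n_j)$ lying in that discrete set. The automatic existence of HN filtrations under a Noetherian heart with discrete imaginary part then follows from the cited criterion of Bridgeland and Macr\`{i}, completing the verification.
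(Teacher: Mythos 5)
There is nothing in this paper to compare your argument against: Theorem \ref{glabal weak stablity condition} is imported verbatim from \cite[Theorem 3.3]{liu2019stability} and no proof is reproduced here, so I can only judge your proposal on its own terms. On those terms it is essentially sound, and it correctly isolates the one genuinely delicate point: weak positivity of $Z_S$ in the boundary case $Im(Z_S(E))=0$, which does \emph{not} follow from the mere fact that $L_E(n)\in\mathbb{H}\cup\mathbb{R}_{\leq 0}$ for $n\gg 0$ (the example $L_E(n)=n^2+in$ shows why the coefficient of $n^r$ can a priori escape the positivity region when the imaginary part has lower degree). Your fix --- identifying $Z_S(E)$ with $Z$ of the derived restriction to a general fibre and invoking the generic ``open heart'' property of the Abramovich--Polishchuk construction (for $E\in\mathcal{A}_S$ there is a dense open $U\subset S$ with $\textbf{L}i_s^*E\in\mathcal{A}$ for all $s\in U$; for $\mathcal{A}=coh(X)$ this is ordinary generic flatness) --- is the right mechanism and is consistent with how the cited reference and this paper's Section \ref{quadratic inequalitties} extract positivity. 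Your verification of the homomorphism property, and the discreteness of the image of $Z_S$ via finite differences (an integer combination $\Delta^rL_E$ lands in the image of $Z$, so $Z_S(E)\in\frac{1}{vol(\mathcal{O}(1))}\mathrm{im}(Z)$), together with Noetherianity of $\mathcal{A}_S$ from Polishchuk's theorem, correctly delivers the HN property by the criterion quoted in the paper.

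Two refinements you should make explicit. First, your Grothendieck--Riemann--Roch identification of the leading coefficient with $ch(\textbf{L}i_s^*E)$ only computes $Z_S$ when $Z$ factors through the Chern character; the theorem assumes only that $Z$ is a homomorphism on $K(D(X))$. The clean way to get the same conclusion in general is the Koszul/finite-difference argument in $K$-theory: cutting by $r$ general divisors in $|\mathcal{O}(1)|$ shows that $r!$ times the leading coefficient of $n\mapsto[\textbf{R}p_*(E\otimes q^*\mathcal{O}(n))]$ equals the class of the restriction of $E$ to $X\times(D_1\cap\cdots\cap D_r)$, i.e.\ a sum of $vol(\mathcal{O}(1))$ fibre restrictions; since $\mathbb{H}\cup\mathbb{R}_{\leq 0}$ is closed under addition, weak positivity still follows. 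Second, the restriction of a general member of $|\mathcal{O}(1)|$ (and of the resulting complete intersection) must again be compatible with the hearts $\mathcal{A}_D$, which is exactly where the generic flatness statement has to be invoked at each step; you should cite it precisely rather than treat it as folklore, since it is the load-bearing input of the whole argument.
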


\begin{remark}
		If we take $X=Spec(\mathbb{C})$, $\mathcal{A}$ is the category of $\mathbb{C}$-vector spaces and $Z(V)=z\cdot dim(V)$ for any finite dimensional $\mathbb{C}$ vector space, where $z\in\mathbb{H}\cup\mathbb{R}_{\leq 0}$. Then the global heart is the category of coherent sheaves on $S$, and $L_E(n)=z\cdot Hilb_E(n)\ for\ n\gg 0$. Therefore, we call $L_E(n)$ the complexified Hilbert polynomial.
\end{remark}

We get the following slope function $\mu_1$ from the pair $(\mathcal{A}_S,Z_S)$:

$$\mu_1(E)\coloneqq\begin{cases} -\frac{Re(Z_S(E))}{Im(Z_S(E))} &\text{if} \ Im(Z_S(E))> 0,\\ +\infty &\text{otherwise.}\end{cases}$$

This weak stability condition is closely related to the global slicing constructed in \cite[Section 4]{bayer2014mmp}. The global slicing will play an important role in this paper, so we include the explicit construction of the global slicing in the next. 

Given a stability condition $\sigma=(Z, \mathcal{P})$ on $D(X)$ and a phase $\phi\in\mathbb{R}$, we have its associated t-structure $$(\mathcal{P}(>\phi)=\mathcal{D}_{\phi}^{\leq -1}(X) ,\mathcal{P}(\leq\phi)=\mathcal{D}_{\phi}^{\geq 0}(X))$$ on $D(X)$. By Abramovich and Polishchuk's construction, we get a global t-structure $$(\mathcal{P}_S(>\phi)\coloneqq\mathcal{D}_{\phi}^{\leq -1}(X\times S), \mathcal{P}_S(\leq \phi))\coloneqq\mathcal{D}_{\phi}^{\geq 0}(X\times S))$$ on $D(X\times S)$. Then we have the following lemma in \cite[Section 4]{bayer2014mmp}.

\begin{lemma}[{\cite[Lemma 4.6]{bayer2014mmp}}]\label{global slicing}
	Assume $\sigma=(Z, \mathcal{P})$ is a weak stability condition as in our setup, and $\mathcal{P}_S(>\phi)$, $\mathcal{P}_S(\leq \phi)$ defined as above. There is a slicing $\mathcal{P}_S$ on $D^b(X\times S)$ defined by
	
	$$\mathcal{P}_S(\phi)=\mathcal{P}_S(\leq \phi)\cap \underset{\epsilon>0}{\cap}\mathcal{P}_S(>\phi-\epsilon).$$
\end{lemma}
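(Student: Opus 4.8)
The plan is to check directly that the assignment $\phi\mapsto\mathcal{P}_S(\phi)$ satisfies the three axioms defining a slicing, i.e.\ axioms (b), (c) and (d) of Definition \ref{slicing} (the ones not referring to the central charge $Z$). The input is that, by Polishchuk's theorem and the construction recalled above, $(\mathcal{P}_S(>\phi),\mathcal{P}_S(\leq\phi))$ is a t-structure on $D(X\times S)$ for every $\phi\in\mathbb{R}$, that these t-structures are nested (for $\phi\leq\psi$ one has $\mathcal{P}_S(>\phi)\supseteq\mathcal{P}_S(>\psi)$), and that the whole family is built functorially from the original slicing $\mathcal{P}$ on $D(X)$ via $\mathbf{R}p_*(-\otimes q^*\mathcal{O}(n))$. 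Axioms (b) and (c) are then formal, while axiom (d), the existence and finiteness of Harder--Narasimhan filtrations, is the real content.

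For axiom (b), the Abramovich--Polishchuk construction commutes with the shift functor: since $\mathbf{R}p_*\big((E[1])\otimes q^*\mathcal{O}(n)\big)=\big(\mathbf{R}p_*(E\otimes q^*\mathcal{O}(n))\big)[1]$ and the original slicing satisfies $\mathcal{P}(>\phi)[1]=\mathcal{P}(>\phi+1)$ and $\mathcal{P}(\leq\phi)[1]=\mathcal{P}(\leq\phi+1)$, one gets $\mathcal{P}_S(>\phi)[1]=\mathcal{P}_S(>\phi+1)$ and $\mathcal{P}_S(\leq\phi)[1]=\mathcal{P}_S(\leq\phi+1)$, whence the defining intersection yields $\mathcal{P}_S(\phi)[1]=\mathcal{P}_S(\phi+1)$. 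For axiom (c), let $\phi_1>\phi_2$ and $A_i\in\mathcal{P}_S(\phi_i)$; taking $\epsilon=\phi_1-\phi_2>0$ gives $A_1\in\mathcal{P}_S(>\phi_1-\epsilon)=\mathcal{P}_S(>\phi_2)$, while $A_2\in\mathcal{P}_S(\leq\phi_2)$, so the left-orthogonality $\mathrm{Hom}(\mathcal{P}_S(>\phi_2),\mathcal{P}_S(\leq\phi_2))=0$ coming from the t-structure forces $\mathrm{Hom}(A_1,A_2)=0$.

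The heart of the matter is axiom (d). First I would prove a boundedness statement: each nonzero $E\in D(X\times S)$ lies in $\mathcal{P}_S(>\phi_-)\cap\mathcal{P}_S(\leq\phi_+)$ for some reals $\phi_-<\phi_+$. This uses that $E$ is a bounded complex and that membership in the global t-structures is governed by the phases of the pushforwards $\mathbf{R}p_*(E\otimes q^*\mathcal{O}(n))$ for the slicing $\mathcal{P}$ on $D(X)$, which lie in a bounded window controlled by the cohomological amplitude of $E$. Inside this window I would build the filtration by repeatedly extracting a top factor: setting $\phi_1=\sup\{\psi:\tau_{>\psi}E\neq0\}$, where $\tau_{>\psi}$ denotes truncation onto the aisle $\mathcal{P}_S(>\psi)$, the right-continuity encoded in the definition $\mathcal{P}_S(\phi)=\mathcal{P}_S(\leq\phi)\cap\bigcap_{\epsilon>0}\mathcal{P}_S(>\phi-\epsilon)$ is exactly what is needed to guarantee that the first factor $A_1$ produced at $\phi_1$ lands in $\mathcal{P}_S(\phi_1)$; the residual object $E'$ in the triangle $A_1\to E\to E'$ then lies in $\mathcal{P}_S(\leq\phi_1)$ with strictly smaller top phase, and one iterates.

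The main obstacle is to show that this procedure terminates after finitely many steps and that the defining suprema are attained, so that the filtration is genuinely finite with semistable factors. Boundedness of the phase window alone does not suffice, since the phases of the factors could a priori accumulate; here I would use the Noetherian property of the global heart together with the discreteness of $\mathrm{Im}(Z)$ assumed in our setup, which are precisely the hypotheses that prevent such accumulation, in the same spirit as the existence of Harder--Narasimhan filtrations for the weak stability condition $(\mathcal{A}_S,Z_S)$ in Theorem \ref{glabal weak stablity condition}. Concretely, I expect to deduce the finiteness for $\mathcal{P}_S$ by refining, on each of the finitely many translated hearts $\mathcal{P}_S(\phi_0,\phi_0+1]$ meeting the window, the coarser HN filtration afforded by $(\mathcal{A}_S,Z_S)$ and checking directly that its factors are $\mathcal{P}_S$-semistable.
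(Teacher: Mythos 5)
A preliminary remark: the paper offers no proof of this lemma at all --- it is quoted wholesale from \cite{bayer2014mmp} --- so there is no internal argument to compare yours against, and what follows is an assessment of your proposal on its own terms. Your verification of axioms (b) and (c) of Definition \ref{slicing} is correct and complete: compatibility with shifts is inherited from the compatibility of $\mathbf{R}p_*(-\otimes q^*\mathcal{O}(n))$ with $[1]$, and the Hom-vanishing follows from the orthogonality in the t-structure $(\mathcal{P}_S(>\phi_2),\mathcal{P}_S(\leq\phi_2))$ exactly as you say. Your reduction of axiom (d) to a bounded phase window plus repeated extraction of a top factor, with the right-continuity built into the definition of $\mathcal{P}_S(\phi)$ ensuring the extracted factor is semistable, is also the right skeleton.

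The gap is in the termination/attainment step, and it is not merely a detail left to the reader: the mechanism you propose for it does not work as stated. The hypotheses you invoke --- Noetherianity of $\mathcal{A}_S$ and discreteness of $\mathrm{Im}(Z)$ --- control only the leading coefficient $a_r+ib_r=Z_S$ of $L_E(n)$, and therefore only reproduce the coarse HN filtration of Theorem \ref{glabal weak stablity condition} that you start from. The factors of that coarse filtration are in general \emph{not} $\mathcal{P}_S$-semistable; the paper's Proposition \ref{semistable reduction} exhibits the failure precisely: a $\mu_1$-semistable $E$ of phase $\phi$ still has a nontrivial quotient $Q\in\mathcal{P}_S(<\phi)$ with $Z_S(Q)=0$. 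So the entire refinement you need takes place inside the subcategory of objects invisible to $Z_S$, where the $\mathcal{P}_S$-phase is read off from the lower-order coefficients $a_{r-1}+ib_{r-1},\dots,a_0+ib_0$. Making your plan work requires an induction on the degree of the polynomial: one must show that on the subcategory $\{E\in\mathcal{A}_S : Z_S(E)=0\}$ the next coefficient is again a weak stability function with Noetherian domain and discrete image (the discreteness does follow, by taking finite differences of $L_E(n)$ at integer arguments, but this has to be argued), and iterate through all $r+1$ coefficients before concluding that phases cannot accumulate. This recursive ``polynomial stability'' structure is the actual content of the cited lemma, and it is absent from your sketch; without it, the claims that the defining suprema are attained and that the extraction procedure terminates are unsupported.
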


To conclude this section, we describe the relation between $\mu_1$ semi-stable objects and the global slicing $\mathcal{P}_S$. 
	
	\begin{prop}[{\cite[Proposition 3.14]{Stabilityconditionsonproductvarieties}}]\label{semistable reduction}
		If  $S$ is a smooth projective variety, and $E\in \mathcal{A}_S$ is semi-stable with respect to $\mu_1$ of phase $\phi$ and $Z_S(E)\neq 0$, then there exists a short exact sequence 
		
		$$0\rightarrow K\rightarrow E\rightarrow Q\rightarrow 0$$
		such that $K\in \mathcal{P}_S(\phi)$, $Q\in \mathcal{P}_S(<\phi)$ and $Z_S(Q)=0$.
	\end{prop}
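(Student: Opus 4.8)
The plan is to read the desired short exact sequence off the Harder--Narasimhan filtration of $E$ with respect to the global slicing $\mathcal{P}_S$ of Lemma \ref{global slicing}, and then to use the $\mu_1$-semistability of $E$ together with the interaction between $\mathcal{P}_S$ and the central charge $Z_S$ to pin down the phases and to force the vanishing of $Z_S(Q)$.

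First I would record two compatibility facts. (i) The global heart coincides with the heart of the slicing, $\mathcal{A}_S = \mathcal{P}_S(0,1]$; both arise by applying the Abramovich--Polishchuk globalization to $\mathcal{A} = \mathcal{P}(0,1]$, so this should follow from the constructions underlying Lemma \ref{global slicing} and Theorem \ref{glabal weak stablity condition}. (ii) A phase-compatibility statement: for every $\psi \in (0,1]$ and every nonzero $A \in \mathcal{P}_S(\psi)$ one has $Z_S(A) \in \mathbb{R}_{\ge 0}\,\exp(i\pi\psi)$; equivalently $Z_S(A)$ either vanishes or has phase exactly $\psi$. This is where the $t$-structure definition of $\mathcal{P}_S$ and the limit definition of $Z_S$ must be reconciled, and I expect it to be the main obstacle. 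By construction $A \in \mathcal{P}_S(\psi)$ means $\mathbf{R}p_*(A \otimes q^*\mathcal{O}(n))$ lies in $\mathcal{P}(\le \psi)$ for $n \gg 0$ and in $\mathcal{P}(>\psi-\epsilon)$ for every $\epsilon > 0$ (for $n$ large depending on $\epsilon$), so one must show this forces the leading term of $L_A(n) = Z(\mathbf{R}p_*(A \otimes q^*\mathcal{O}(n)))$ onto the ray of phase $\psi$. The delicate point is the non-uniformity of ``$n \gg 0$'' in $\epsilon$, which I would handle by an asymptotic analysis of the phases of the $\sigma$-HN factors of $\mathbf{R}p_*(A \otimes q^*\mathcal{O}(n))$ as $n \to \infty$, squeezing them into $(\psi-\epsilon,\psi]$.

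Granting these, I would take the $\mathcal{P}_S$-HN filtration (available since $\mathcal{P}_S$ is a slicing, i.e.\ satisfies axiom (d) of Definition \ref{slicing}) $0 = E_0 \subset E_1 \subset \cdots \subset E_k = E$ with factors $A_i = E_i/E_{i-1} \in \mathcal{P}_S(\psi_i)$ and strictly decreasing phases $\psi_1 > \cdots > \psi_k$, all in $(0,1]$ by (i). Since $\mathcal{A}_S$ is the heart, this is a filtration by subobjects, and I set $K = E_1$, $Q = E/E_1$. To get $\psi_1 \le \phi$ I use that $K = A_1 \subset E$ is a subobject, so $\mu_1$-semistability gives $\mu_1(A_1) \le \mu_1(E)$; expressing both slopes through (ii) as $-\cot(\pi(\cdot))$ (with the value $+\infty$ at phase $1$) and invoking monotonicity of $-\cot$ on $(0,1]$ yields $\psi_1 \le \phi$; this is the one place where the hypothesis is essential. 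Conversely, since $Z_S(E) \ne 0$ has phase $\phi$ and equals $\sum_i Z_S(A_i)$ with each summand either zero or of phase $\psi_i \le \phi$, a summand of phase exactly $\phi$ must occur, because a sum of nonzero vectors of phase strictly below $\phi$ stays in the upper half plane with phase $< \phi$. By strict monotonicity of the $\psi_i$ this forces $\psi_1 = \phi$, hence $K \in \mathcal{P}_S(\phi)$ and $A_2,\dots,A_k$, and therefore $Q$, lie in $\mathcal{P}_S(<\phi)$.

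Finally, to obtain $Z_S(Q) = 0$ I would argue purely with the planar geometry of $Z_S$: in $Z_S(E) = Z_S(K) + Z_S(Q)$ the term $Z_S(K)$ has phase $\phi$ or is zero, while $Z_S(Q) = \sum_{i \ge 2} Z_S(A_i)$ is zero or of phase strictly less than $\phi$. Were $Z_S(Q)$ nonzero, then $Z_S(E)$ would be the sum of a phase-$\phi$ (or zero) vector and a strictly-lower-phase vector, forcing the phase of $Z_S(E)$ to drop below $\phi$ and contradicting $\mathrm{Arg}(Z_S(E)) = \pi\phi$. Hence $Z_S(Q) = 0$, which completes the construction of the sequence $0 \to K \to E \to Q \to 0$ with the required properties.
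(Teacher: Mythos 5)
This proposition is quoted from \cite[Proposition~3.14]{liu2019stability}; the present paper contains no proof of it, so there is no in-paper argument to compare against line by line. That said, your proposal is correct and is essentially the natural (and, as far as one can reconstruct, the intended) argument: take the Harder--Narasimhan filtration of $E$ with respect to the global slicing $\mathcal{P}_S$, let $K=E_1$ be the top piece and $Q=E/E_1$, and use the planar geometry of $Z_S$ to force $\psi_1=\phi$ and $Z_S(Q)=0$. Your two preliminary facts are exactly the compatibilities the paper itself relies on elsewhere: (i) $\mathcal{A}_S=\mathcal{P}_S(0,1]$ is invoked verbatim at the start of the proof of Lemma~\ref{linear inequality}, and (ii) the statement that a nonzero $A\in\mathcal{P}_S(\psi)$ has $Z_S(A)$ either zero or of phase exactly $\psi$ is precisely the step ``$\mathrm{Arg}(L_{E_i/E_{i-1}}(n))\in(\pi(\phi_i-\epsilon),\pi\phi_i]$ for all $\epsilon>0$ and $n\gg0$, hence the first nonzero coefficient has argument $\pi\phi_i$'' in that same proof; the non-uniformity of ``$n\gg0$'' in $\epsilon$ that you flag is harmless because the argument of $L_A(n)$ converges to that of its leading nonzero coefficient, so membership in $(\pi(\psi-\epsilon),\pi\psi]$ for each fixed $\epsilon$ pins the limit to $\pi\psi$. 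One small point worth making explicit in a write-up: when $\mathrm{Im}\,Z_S(E)=0$ (so $\phi=1$) the slope comparison degenerates to $+\infty$ on both sides, but then $\psi_1\le 1=\phi$ holds automatically and the rest of your phase bookkeeping goes through unchanged, so no case is actually lost.
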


	\section{Positivity of the coefficients}\label{quadratic inequalitties}
	
	In this section, we will investigate some positivity of the coefficients in the complexified Hilbert polynomial $L_E(n)$. 
	
	Under the same assumption in Section \ref{Preliminary results}, we can write the polynomial $L_E(n)$ as:  $$L_E(n)=a_r(E)n^r+a_{r-1}(E)n^{r-1}+\cdots +a_0(E)+i(b_r(E)n^r+b_{r-1}(E)n^{r-1}\cdots+b_0(E))$$ where $a_i, b_i$ are linear maps from $ K(D(X\times S))$ to $\mathbb{R}$ for any $0\leq i\leq r$. We have the following positivity condition.
	
	\begin{lemma}\label{linear inequality}
		Suppose $E$ is an object in $\mathcal{A}_S$, then 
		
			(1) $b_r(E)\geq 0$.
			
		(2) If $b_r(E)=0$, then $a_r(E)\leq0$ and $b_{r-1}(E)\geq 0$.

			(3) In general, if $$b_r(E)=a_r(E)=b_{r-1}(E)=\cdots=a_i(E)=b_{i-1}(E)=0,$$ then $a_{i-1}(E)\leq 0$ and $b_{i-2}(E)\geq 0$ for any $2\leq i\leq r$.
			
			(4) Moreover, if $\sigma=(\mathcal{A},Z)$ is a stability condition on $D(X)$, and $E$ is a nonzero object in $\mathcal{A}_S$, then $$b_r(E)=a_r(E)=b_{r-1}(E)=\cdots=a_1(E)=b_{0}(E)=0$$ implies $a_0(E)<0$.
	\end{lemma}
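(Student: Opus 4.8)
The plan is to reduce the statement to a single application of the positivity axiom of the genuine stability condition $\sigma$. Write $P_n(E)=\mathbf{R}p_*(E\otimes q^*(\mathcal{O}(n)))$, so that $L_E(n)=Z(P_n(E))$ for $n\gg 0$, and $P_n(E)\in\mathcal{A}$ for all $n\gg 0$ because $E\in\mathcal{A}_S$ (Theorem \ref{glabal weak stablity condition}). Under the hypothesis $b_r(E)=a_r(E)=\cdots=a_1(E)=b_0(E)=0$, every coefficient except possibly $a_0(E)$ vanishes; hence $Im(L_E(n))=\sum_k b_k(E)n^k\equiv 0$ while $Re(L_E(n))=a_0(E)$ is constant. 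Equivalently, for all $n\gg 0$ the object $P_n(E)\in\mathcal{A}$ satisfies $Im(Z(P_n(E)))=0$ and $Re(Z(P_n(E)))=a_0(E)$.

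Granting this, the conclusion follows immediately from Definition \ref{second definition}(a): since $\sigma$ is a genuine (not merely weak) stability condition, every nonzero object $F\in\mathcal{A}$ with $Im(Z(F))=0$ satisfies $Re(Z(F))<0$. Thus it suffices to produce a single index $n\gg 0$ with $P_n(E)\neq 0$; applying the positivity property to $F=P_n(E)$ then gives $a_0(E)=Re(Z(P_n(E)))<0$, which is the desired strict inequality. This is precisely the step that uses genuineness of $\sigma$: for a merely weak stability condition the positivity property only yields $Re(Z(F))\le 0$, so one would obtain $a_0(E)\le 0$ and no more, explaining why the hypothesis of (4) must be strengthened from weak to genuine.

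The key step is therefore the claim that $E\neq 0$ forces $P_n(E)\neq 0$ for some $n\gg 0$; that is, the family $\{P_n\}_{n\gg 0}$ is jointly conservative on $D(X\times S)$. I would prove this by adjunction and the projection formula: for any $A\in D(X)$, any $j\in\mathbb{Z}$, and any $n$,
$$\mathrm{Hom}_{X\times S}\!\big(p^*A\otimes q^*(\mathcal{O}(-n)),\,E[j]\big)\;\cong\;\mathrm{Hom}_{X}\!\big(A,\,\mathbf{R}p_*(E\otimes q^*(\mathcal{O}(n)))[j]\big)\;=\;\mathrm{Hom}_X(A,P_n(E)[j]).$$
If $P_n(E)=0$ for all $n\ge n_0$, the left-hand side vanishes for every $A\in D(X)$, every $j$, and every $m=-n\le -n_0$, so $E$ is right-orthogonal to the collection $\{\,p^*A\otimes q^*(\mathcal{O}(m)) : A\in D(X),\ m\le -n_0\,\}$. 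Since the bounded-above powers $\{\mathcal{O}_S(m):m\le -n_0\}$ of the ample line bundle generate $D(S)$, and external products $p^*A\otimes q^*B$ generate $D(X\times S)$, this collection generates $D(X\times S)$; hence $E=0$, a contradiction. Choosing $n_0$ large enough that also $P_n(E)\in\mathcal{A}$ for $n\ge n_0$ produces the required $n\gg 0$.

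The main obstacle is exactly this generation/conservativity claim: once it is in place, the rest of (4) is a formal consequence of the positivity axiom. The remaining ingredients—that the hypothesis annihilates the entire imaginary part of $L_E$ together with all higher real coefficients, and that $P_n(E)\in\mathcal{A}$ for $n\gg 0$—follow directly from the Setup and the earlier results, with no further computation.
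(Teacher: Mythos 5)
Your treatment of part (4) is correct, and it actually supplies a detail the paper leaves implicit: the paper's proof of (4) is the one-line assertion that $L_E(n)=0$ forces $p_*(E\otimes q^*\mathcal{O}(n))=0$ for $n\gg 0$ and hence $E=0$, whereas you prove the joint conservativity of the functors $P_n=\mathbf{R}p_*(-\otimes q^*\mathcal{O}(n))$ via adjunction and generation of $D(X\times S)$ by box products. That argument is sound (one could equally argue by taking the top nonvanishing cohomology sheaf of $E$ and Serre vanishing for the ample class $p^*\mathcal{O}_X(a)\otimes q^*\mathcal{O}_S(b)$), and your identification of where genuineness of $\sigma$ enters is exactly right.

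However, there is a genuine gap: the lemma has four parts and your proposal proves only the fourth. You dismiss the rest as following ``directly from the Setup,'' but that is not true for the real-part inequalities in (2) and (3). Part (1) and the claim $b_{i-2}(E)\geq 0$ do follow from $Im(L_E(n))\geq 0$ for $n\gg 0$, and $a_r(E)\leq 0$ in (2) follows from $Z_S$ being a weak stability function on $\mathcal{A}_S$. But the claim $a_{i-1}(E)\leq 0$ in (3) cannot be extracted from the positivity axiom applied to $P_n(E)$: under the hypotheses of (3) nothing forces $b_{i-2}(E)=0$, so one can have $Im(L_E(n))>0$ for all $n\gg 0$, in which case the condition $L_E(n)\in\mathbb{H}\cup\mathbb{R}_{\leq 0}$ places no constraint whatsoever on the sign of $a_{i-1}(E)$. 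The paper's proof of (1)--(3) instead takes the Harder--Narasimhan filtration of $E$ with respect to the global slicing $\mathcal{P}_S$ of Lemma \ref{global slicing}: for a factor in $\mathcal{P}_S(\phi_i)$ the argument of $L(n)$ lies in $(\pi(\phi_i-\epsilon),\pi\phi_i]$ for all $\epsilon>0$ and $n\gg 0$, so the leading nonzero complex coefficient $a_j+ib_j$ of each factor has argument exactly $\pi\phi_i\in(0,\pi]$; summing over the filtration, the leading nonzero coefficient of $L_E(n)$ has argument in $(0,\pi]$, and when that coefficient is forced to be real by the vanishing hypotheses it must be nonpositive. This use of the slicing (rather than just the weak stability function $Z_S$) is the essential idea of the lemma, and it is absent from your proposal.
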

	\begin{proof}
			Since $E\in\mathcal{A}_S=\mathcal{P}_S(0,1]$, we have the HN-filtration of $E$ with respect to the slicing $\mathcal{P}_S$. We get $$0=E_0\subset E_1 \subset E_2\subset \cdots \subset E_{m-1} \subset E_m=E,$$ where $E_{i}/E_{i-1}\in\mathcal{P}_S(\phi_i)$ and $\phi_i\in(0,1]$. 
			
			By Lemma \ref{global slicing}, the argument $Arg(L_{E_i/E_{i-1}}(n))$ lies in $(\pi(\phi_i-\epsilon),\pi\phi_i]$ for any $\epsilon>0$ and $n\gg 0$ . This implies that the argument of the first nonzero coefficient of $L_{E_i/E_{i-1}}(n)$ is $\pi\phi_i$. Therefore, the argument of the first  nonzero coefficient of $L_{E_i}(n)$ is in $(0,\pi]$. Combing this with the fact $L_E(n)\in\mathbb{H}\cup\mathbb{R}_{<0}$ for $n\gg 0$, we get (1), (2) and (3). 
			
			For (4), it is because $L_{E}(n)=0$ implies $p_*(E\otimes q^*(\mathcal{O}(n)))=0$ for $n\gg0$, which implies $E=0$.
		
	\end{proof}
	
	The definition of $\mathcal{P}_S(\phi)$ also gives us the following sequence of quadratic inequalities.
	\begin{lemma}\label{first quadratic inequlity}
		Suppose $E$ is an object in $\mathcal{P}_S(\phi)$, then we have the following inequalities;
		
		(1) $b_r(E)a_{r-1}(E)-b_{r-1}(E)a_r(E)\geq 0$.

		(2) In general, for any $1\leq i\leq r-1$, if $$b_r(E)a_{r-1}(E)-b_{r-1}(E)a_r(E)=\cdots=b_r(E)a_{i}(E)-b_{i}(E)a_r(E)=0,$$ then $b_r(E)a_{i-1}(E)-b_{i-1}(E)a_r(E)\geq 0$.
		
	\end{lemma}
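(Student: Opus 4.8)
The plan is to read off both families of inequalities from the asymptotic behaviour of the argument of $L_E(n)$, exactly as in the proof of Lemma \ref{linear inequality}. Write $c_k=a_k(E)+ib_k(E)$ for the complex $k$-th coefficient of $L_E(n)$, and set $D_k=b_r(E)a_k(E)-b_k(E)a_r(E)$, so that the quantities appearing in (1) and (2) are precisely the $D_k$. A direct expansion gives $D_k=Im(\overline{c_k}\,c_r)$, and hence, when $c_r\neq 0$, one has $Im(c_k/c_r)=-D_k/|c_r|^2$; thus $D_k\geq 0$ is equivalent to $Im(c_k/c_r)\leq 0$. Since replacing $E$ by $E[1]$ multiplies every $c_k$ by $-1$ and leaves each $D_k$ unchanged, I may assume $\phi\in(0,1]$. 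If $c_r=0$ then $a_r(E)=b_r(E)=0$, every $D_k$ vanishes, and both statements hold trivially; so from now on I assume $c_r\neq 0$, i.e. $L_E(n)$ has degree exactly $r$.

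The single input I would use is the argument bound already exploited in Lemma \ref{linear inequality}: since $E\in\mathcal{P}_S(\phi)$, Lemma \ref{global slicing} gives $Arg(L_E(n))\in(\pi(\phi-\epsilon),\pi\phi]$ for every $\epsilon>0$ and all $n\gg 0$. In particular the leading coefficient $c_r$ has argument $\pi\phi$, and $Arg(L_E(n))\leq\pi\phi$ for $n\gg 0$. Factoring out the leading term, write
$$L_E(n)=c_r\,n^r\,B(n),\qquad B(n)=1+\sum_{k<r}\frac{c_k}{c_r}\,n^{k-r}.$$
Since $n^r>0$ and $Arg(c_r)=\pi\phi$, the bound $Arg(L_E(n))\leq\pi\phi$ becomes $Arg(B(n))\leq 0$ for $n\gg 0$; and because $B(n)\to 1$, so that $Re(B(n))\to 1>0$, this is equivalent to $Im(B(n))\leq 0$ for all $n\gg 0$.

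It then remains to extract the sign of the leading term of $Im(B(n))$. For (1), the imaginary part of $B(n)$ equals $Im(c_{r-1}/c_r)\,n^{-1}+O(n^{-2})$, so $Im(B(n))\leq 0$ for large $n$ forces $Im(c_{r-1}/c_r)\leq 0$, i.e. $D_{r-1}\geq 0$. For (2), the hypotheses $D_{r-1}=\cdots=D_i=0$ say exactly that $Im(c_k/c_r)=0$ for $i\leq k\leq r-1$, i.e. each of $c_{r-1},\dots,c_i$ is a real multiple of $c_r$; these terms therefore contribute nothing to $Im(B(n))$, whose leading surviving term is now $Im(c_{i-1}/c_r)\,n^{\,i-1-r}$. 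The sign constraint $Im(B(n))\leq 0$ then gives $Im(c_{i-1}/c_r)\leq 0$, that is $D_{i-1}\geq 0$.

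The only delicate point is the passage from ``the leading nonzero term of $Im(B(n))$ is non-positive'' to the stated inequality: I must justify that for $n$ large the sign of $Im(B(n))$ is governed by its highest surviving power of $1/n$, and treat separately the degenerate case in which the relevant coefficient $Im(c_{i-1}/c_r)$ itself vanishes — there $D_{i-1}=0$ and the inequality holds with equality, so no contradiction arises. Everything else reduces to the bookkeeping of real and imaginary parts recorded above.
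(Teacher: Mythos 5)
Your proposal is correct and follows essentially the same route as the paper: both use the membership $E\in\mathcal{P}_S(\phi)$ to bound $Arg(L_E(n))$ above by $\pi\phi$ for $n\gg 0$, and then read off the sign of $b_r(E)a_k(E)-b_k(E)a_r(E)$ from the position of $a_k(E)+ib_k(E)$ relative to the leading coefficient, inducting downward when the earlier determinants vanish. Your reformulation via $D_k=Im(\overline{c_k}\,c_r)$ and the factorization $L_E(n)=c_rn^rB(n)$ is just a more explicit bookkeeping of the paper's geometric statement that $c_{k}$ ``is not on the left side of the line through $0$ and $c_r$.''
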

	
	\begin{proof} If $a_r(E)+ib_r(E)=0$, the lemma is vacuous. Hence we can assume that $a_r(E)+ib_r(E)\neq 0$. This implies that $Arg(a_r(E)+ib_r(E))$ is $\pi\phi$ by the proof of last lemma. 
	
		Recall the definition $$\mathcal{P}_S(\phi)=\mathcal{P}_S(\leq \phi)\cap \underset{\epsilon>0}{\cap}\mathcal{P}_S(>\phi-\epsilon).$$ 
		Hence, we know that for any small enough $\epsilon>0$, there exists a positive integer $N_0$, such that for any positive integer $n>N_0$,  $Arg(L_E(n))$ is in $(\pi(\phi-\epsilon),\pi\phi]$.
			
			Therefore, the complex number $a_{r-1}(E)+ib_{r-1}(E)$ is not on the left side of the line passing from $0$ to $a_r(E)+ib_r(E)$ in the complex plane. Otherwise, we could find a positive integer $N_1$, such that for any integer $n>N_1$, the $Arg(L_E(n))$ is strictly bigger that $\pi\phi$, this contradicts to the fact $E\in\mathcal{P}_S(\phi)$. This implies $$b_r(E)a_{r-1}(E)-a_r(E)b_{r-1}(E)\geq 0.$$
			
			If $b_r(E)a_{r-1}(E)-a_r(E)b_{r-1}(E)= 0$, this means that $a_{r-1}(E)+ib_{r-1}(E)$ is on the line from $0$ to $ (a_r(E)+ib_r(E))$.   By the same argument, we show that  $a_{r-2}(E)+ib_{r-2}(E)$ is not on the left side of the line passing from $0$ to $a_r(E)+ib_r(E)$. Hence,  the following inequality holds.
			
			$$b_r(E)a_{r-2}(E)-a_r(E)b_{r-2}(E)\geq 0.$$
			
			Inductively using the same argument, we proved (2).
	\end{proof}
	
	\begin{remark}
In the case when $X$ is point, these inequalities are vacuous. Indeed, all complex numbers $a_j(E)+ib_j(E)$ are on the same line in the complex plane, for any $0\leq j\leq r$. This first inequality in (1) also follows from the positivity lemma in \cite{bayer2014projectivity}. 
	\end{remark}
	Combining last Lemma with Lemma \ref{semistable reduction}, we get the following theorem.
	
	\begin{lemma}\label{second quadratic inequality}
			If $E\in\mathcal{A}_S$ is semi-stable with respect to $\mu_1$, then the following inequalities are satisfied.

			(1) $b_r(E)a_{r-1}(E)-b_{r-1}(E)a_r(E)\geq 0$.

			(2) In general, if $$b_r(E)a_{r-1}(E)-b_{r-1}(E)a_r(E)=\cdots=b_r(E)a_{i}(E)-b_{i}(E)a_r(E)=0,$$ Then, $b_r(E)a_{i-1}(E)-b_{i-1}(E)a_r(E)\geq 0$ for any $1\leq i\leq r-1 $.
	\end{lemma}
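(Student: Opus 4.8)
The plan is to reduce the $\mu_1$-semistable case to the single-slice case already handled in Lemma \ref{first quadratic inequlity}, using the semistable reduction sequence of Proposition \ref{semistable reduction} to transport that object into (a shift-free portion of) the slicing $\mathcal{P}_S$.

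First I would dispose of the degenerate case. The leading coefficient of $L_E(n)$ satisfies $Z_S(E)=(a_r(E)+ib_r(E))\,r!/vol(\mathcal{O}(1))$, so $Z_S(E)=0$ is equivalent to $a_r(E)=b_r(E)=0$. In that situation every expression $b_r(E)a_j(E)-b_j(E)a_r(E)$ vanishes identically: statement (1) reads $0\geq 0$, and in (2) both the hypotheses and the conclusion are identically zero, so the lemma holds trivially. Hence I may assume $Z_S(E)\neq 0$ and let $\phi\in(0,1]$ be the phase of $E$, i.e. $Arg(Z_S(E))=\pi\phi$.

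Next I would invoke Proposition \ref{semistable reduction} to obtain $0\to K\to E\to Q\to 0$ with $K\in\mathcal{P}_S(\phi)$, $Q\in\mathcal{P}_S(<\phi)$ and $Z_S(Q)=0$. Since the functionals $a_j,b_j$ are linear on $K(D(X\times S))$, the polynomial $L$ is additive along this sequence, so $a_j(E)=a_j(K)+a_j(Q)$ and $b_j(E)=b_j(K)+b_j(Q)$ for all $j$. From $Z_S(Q)=0$ I get $a_r(Q)=b_r(Q)=0$, whence $a_r(E)=a_r(K)$, $b_r(E)=b_r(K)$; in particular the leading coefficient $a_r(E)+ib_r(E)$ is nonzero with argument exactly $\pi\phi$. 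Both $K\in\mathcal{P}_S(\phi)\subset\mathcal{P}_S(\leq\phi)$ and $Q\in\mathcal{P}_S(<\phi)\subset\mathcal{P}_S(\leq\phi)$, and $\mathcal{P}_S(\leq\phi)$ is closed under extensions, so $E\in\mathcal{P}_S(\leq\phi)$. Combining this with $E\in\mathcal{A}_S=\mathcal{P}_S(0,1]$ gives $E\in\mathcal{P}_S(0,\phi]$, i.e. all HN-factors of $E$ with respect to the slicing $\mathcal{P}_S$ have phases in $(0,\phi]$.

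The final step reruns the geometric argument of Lemma \ref{first quadratic inequlity} for $E$ in place of a single slice. Using the basic estimate (from Lemma \ref{global slicing}, exactly as in the proof of Lemma \ref{linear inequality}) that a factor $F\in\mathcal{P}_S(\psi)$ has $Arg(L_F(n))\in(\pi(\psi-\epsilon),\pi\psi]$ for $n\gg 0$, all the summands $L_F(n)$ of $L_E(n)=\sum_F L_F(n)$ lie, for $n\gg 0$, in the angular sector $\{z\neq 0: 0<Arg(z)\leq\pi\phi\}$, whose opening $\pi\phi\leq\pi$ makes it convex; hence $Arg(L_E(n))\leq\pi\phi$ for $n\gg 0$. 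Together with the fact that the leading coefficient points in the direction $\pi\phi$, this is precisely the hypothesis consumed in the proof of Lemma \ref{first quadratic inequlity}: if $a_{r-1}(E)+ib_{r-1}(E)$ lay strictly on the left of the line from $0$ to $a_r(E)+ib_r(E)$, then $Arg(L_E(n))>\pi\phi$ for large $n$, a contradiction, which yields (1); under the vanishing hypotheses one repeats the argument on the successive coefficients to obtain (2).

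I expect the main obstacle to be the third-paragraph reduction, specifically pinning down that $E\in\mathcal{P}_S(0,\phi]$ and that the phases of $Q$ stay positive (so that no factor wraps past the convex sector and cancellation cannot push $Arg(L_E(n))$ above $\pi\phi$); once that is secured, the transfer of both the base inequality and its inductive chain from $\mathcal{P}_S(\phi)$ to the $\mu_1$-semistable object $E$ is exactly Lemma \ref{first quadratic inequlity}, since its proof only uses membership in $\mathcal{P}_S(\leq\phi)$ together with the leading direction $\pi\phi$.
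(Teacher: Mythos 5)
Your proof is correct, and it reorganizes the paper's argument in a genuinely different (and arguably cleaner) way. Both proofs hinge on the same two inputs, Proposition \ref{semistable reduction} and the angular-sector argument behind Lemma \ref{first quadratic inequlity}, but they exploit the reduction sequence $0\to K\to E\to Q\to 0$ differently. The paper splits the class numerically, $[E]=[K]+[Q]$, proves $b_r(K)a_{r-1}(K)-b_{r-1}(K)a_r(K)\geq 0$ by Lemma \ref{first quadratic inequlity} and $b_r(E)a_{r-1}(Q)-a_r(E)b_{r-1}(Q)\geq 0$ by comparing the phase of the leading nonzero coefficient of $L_Q$ with $\pi\phi$, and then adds the two inequalities (iterating coefficient by coefficient for part (2)). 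You instead use the sequence only categorically: since $\mathcal{P}_S(\leq\phi)$ is the coaisle of a t-structure it is extension-closed, so $K,Q\in\mathcal{P}_S(\leq\phi)$ forces $E\in\mathcal{P}_S(0,\phi]$, and then the leading coefficient of $L_E$ has argument exactly $\pi\phi$ while $Arg(L_E(n))\leq\pi\phi$ for $n\gg 0$ --- which is all the proof of Lemma \ref{first quadratic inequlity} actually consumes, so it applies verbatim to $E$ itself. What your route buys is the elimination of the case analysis on whether $a_j(Q)+ib_j(Q)$ vanishes and of the bookkeeping needed to add inequalities at each stage of the induction; it also handles the boundary case $b_r(E)=0$, $a_r(E)<0$ (phase $\phi=1$) uniformly rather than via a separate appeal to Lemma \ref{linear inequality}, with only the truly degenerate case $Z_S(E)=0$ set aside (where every expression is identically zero). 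The one step you should spell out is the upper bound $Arg(L_E(n))\leq\pi\phi$ for an object of $\mathcal{P}_S(0,\phi]$ that is not concentrated in a single slice: it follows either from your convex-sector summation over the $\mathcal{P}_S$-HN factors or directly from the pushforward description of $\mathcal{P}_S(\leq\phi)$ together with positivity of $Z$ on $\mathcal{P}(0,\phi]$, i.e.\ by exactly the mechanism the paper already uses in Lemmas \ref{linear inequality} and \ref{first quadratic inequlity}.
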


	\begin{proof}
				If $b_r(E)=0$, then (1) follows from Lemma \ref{linear inequality}. For (2), let us assume that $$b_r(E)a_{r-1}(E)-b_{r-1}(E)a_r(E)=\cdots=b_r(E)a_{i}(E)-b_{i}(E)a_r(E)=0,$$ then either $a_r(E)=0$ or $$b_{r-1}(E)=\cdots=b_{i}(E)=0.$$ In the first case, (2) becomes vacuous. The second case implies $b_{i-1}(E)\geq 0$ since $$L_E(n)\in\mathbb{H}\cup\mathbb{R}_{\leq 0}$$ for $n\gg 0$, so we also have $b_r(E)a_{i-1}(E)-b_{i-1}(E)a_r(E)\geq 0$ by Lemma \ref{linear inequality}.
			
			Therefore, we can assume $b_r(E)> 0$. By Proposition \ref{semistable reduction}, there exists a short exact sequence	in $\mathcal{A}_S$ $$0\rightarrow K\rightarrow E\rightarrow Q\rightarrow 0$$
			such that $K\in \mathcal{P}_S(\phi)$, $Q\in \mathcal{P}_S(<\phi)$ and $Z_S(Q)=0$.

			If $Q=0$, the statement follows from Lemma \ref{first quadratic inequlity}.

			Therefore. we can assume $Q$ is a nonzero object in $\mathcal{P}_S(<\phi)$. By the fact  $K\in \mathcal{P}_S(\phi)$, $Q\in \mathcal{P}_S(<\phi)$, we get  that if $a_{r-1}(Q)+ib_{r-1}(Q)\neq 0$, then $$\frac{-a_r(E)}{b_r(E)}>\frac{-a_{r-1}(Q)}{b_{r-1}(Q)},$$
which is equivalent to $$b_r(E)a_{r-1}(Q)-a_r(E)b_{r-1}(Q)> 0.$$			
			Lemma \ref{first quadratic inequlity} and Proposition \ref{semistable reduction}  imply the following inequality.
			
			$$b_r(E)a_{r-1}(K)-b_{r-1}(K)a_r(E)=b_r(K)a_{r-1}(K)-b_{r-1}(K)a_r(K)> 0.$$
			
		Adding these two inequalities, we get (1).

		Now suppose 
			
			$$b_r(E)a_{r-1}(E)-b_{r-1}(E)a_r(E)= 0,$$ This implies $$b_r(K)a_{r-1}(K)-b_{r-1}(K)a_r(K)=0$$ and $a_{r-1}(Q)=b_{r-1}(Q)=0$.
			
			Then by Lemma \ref{first quadratic inequlity}, we get $$b_r(K)a_{r-2}(K)-a_r(K)b_{r-2}(K)\geq 0.$$  The fact $Q\in\mathcal{P}_S(<\phi)$ implies that if $a_{r-2}(Q)+ib_{r-2}(Q)\neq 0$, then $$\frac{-a_r(E)}{b_r(E)}>\frac{-a_{r-2}(Q)}{b_{r-2}(Q)}.$$
			
		Similarly, we have $$b_r(E)a_{r-2}(E)-b_{r-2}(E)a_r(E)\geq 0$$.
			
			Inductively using the same argument, we get (2).
			
	\end{proof}
	
	By Lemma \ref{linear inequality}, the $ a_r-t\cdot b_{r-1}+ib_r$ is a weak stability function on $\mathcal{A}_S$ for any $t\in\mathbb{R}_{\geq 0}$. Moreover, since $\mathcal{A_S}$ is Noetherian and $b_r$ has discrete image, the HN property is satisfied. Hence, the pair $\sigma_t=(\mathcal{A}_S,a_r-t\cdot b_{r-1}+ib_r)$ is a weak stability condition for any $t\in\mathbb{R}_{\geq 0}$.

	\begin{theorem}\label{third quadratic inequality}
		For any $t\in\mathbb{R}_{\geq 0}$, if $E\in\mathcal{A}_S$ is semi-stable with respect to $\sigma_t$, the following inequalities are satisfied.

		(1) $b_r(E)a_{r-1}(E)-b_{r-1}(E)a_r(E)\geq 0$.

		(2) In general, if $$b_r(E)a_{r-1}(E)-b_{r-1}(E)a_r(E)=\cdots=b_r(E)a_{i}(E)-b_{i}(E)a_r(E)=0,$$ Then, $b_r(E)a_{i-1}(E)-b_{i-1}(E)a_r(E)\geq 0$ for any $1\leq i\leq r-1 $.
	\end{theorem}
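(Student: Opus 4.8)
The plan is to run the proof of Lemma \ref{second quadratic inequality} with the slope $\mu_1$ replaced by the slope of $\sigma_t$, namely $\mu_t(E)=-\frac{a_r(E)-t\,b_{r-1}(E)}{b_r(E)}$ when $b_r(E)>0$ and $+\infty$ otherwise. The instance $t=0$ is literally Lemma \ref{second quadratic inequality}, since $\sigma_0=(\mathcal{A}_S,a_r+ib_r)$ has the same slope function $\mu_1$ as $(\mathcal{A}_S,Z_S)$; so the only content is the passage to $t>0$, and as in that lemma I would divide into the cases $b_r(E)=0$ and $b_r(E)>0$.

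If $b_r(E)=0$ the argument is unchanged from Lemma \ref{second quadratic inequality}: Lemma \ref{linear inequality} forces either $a_r(E)=0$, which makes every $b_r(E)a_j(E)-b_j(E)a_r(E)$ vanish, or $b_{r-1}(E)=\cdots=b_i(E)=0$ with $a_r(E)\le 0$ and $b_{i-1}(E)\ge 0$, whence $b_r(E)a_{i-1}(E)-b_{i-1}(E)a_r(E)=-b_{i-1}(E)a_r(E)\ge 0$. So one may assume $b_r(E)>0$, and then $\mu_t(E)$ is finite.

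In the main case the heart of the matter is a semistable reduction for $\sigma_t$ in the spirit of Proposition \ref{semistable reduction}. The easy half is that a $\sigma_t$-semistable $E$ with $b_r(E)>0$ has no nonzero subobject $N$ with $b_r(N)=0$, since such an $N$ would have $\mu_t(N)=+\infty>\mu_t(E)$; thus the ``$b_r=0$'' part of $E$ can occur only as a quotient. What I would aim to produce is a short exact sequence $0\to K\to E\to Q\to 0$ in which $K$ is constrained by single-slice inequalities of the kind proved in Lemma \ref{first quadratic inequlity}, while $Q$ is a null quotient with $b_r(Q)=0$ lying in strictly smaller phases. Granting this, I would expand each form $b_r(E)a_j(E)-b_j(E)a_r(E)$ by additivity of the functionals $a_\bullet,b_\bullet$, bound the $K$-contribution by Lemma \ref{first quadratic inequlity}, bound the $Q$-contribution and the cross term by the slope inequality expressing that $Q$ lies below $K$, and add the pieces exactly as in Lemma \ref{second quadratic inequality} to get (1) and, by induction on $i$, (2).

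The main obstacle is precisely this reduction, because the step from $\sigma_0$ to $\sigma_t$ destroys the clean geometry used for $\mu_1$. In the $\mu_1$ case a semistable object lies in a single slice $\mathcal{P}_S(\phi)$ of the global slicing of Lemma \ref{global slicing}, so all the complex numbers $a_j(E)+ib_j(E)$ sit on one side of the ray of argument $\pi\phi$, which is exactly what the proof of Lemma \ref{first quadratic inequlity} uses. For $\sigma_t$ the real part $a_r$ is replaced by $a_r-t\,b_{r-1}$, which couples the top two coefficients of $L_E(n)$; the $\sigma_t$-phase is then no longer the argument of one leading coefficient, and a $\sigma_t$-semistable object generally spreads over several slices $\mathcal{P}_S(\phi_i)$ whose factors are aligned by $\mu_t$ rather than by the function $Z_S$ (equivalently $\mu_1$) that $\mathcal{P}_S$ refines. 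I expect the actual work to be either to construct a global slicing adapted to $\sigma_t$ and re-derive the single-slice inequalities for this sheared datum, translating them back into the untwisted form $b_ra_j-b_ja_r\ge 0$; or else, working directly with the $\mathcal{P}_S$-Harder--Narasimhan filtration of $E$, to show that the $\mu_t$-alignment of the factors forces the off-diagonal cross terms in $b_r(E)a_{r-1}(E)-b_{r-1}(E)a_r(E)$ to carry the correct sign. Verifying that these cross terms close up is where the argument will stand or fall.
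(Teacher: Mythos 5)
There is a genuine gap: you correctly isolate the hard point (a $\sigma_t$-semistable object for $t>0$ is not governed by a single slice of $\mathcal{P}_S$, so neither Lemma \ref{first quadratic inequlity} nor Proposition \ref{semistable reduction} applies directly), but you then only list two possible strategies without carrying either one out, and you explicitly defer the decisive step (``verifying that these cross terms close up is where the argument will stand or fall''). Neither of your two proposed routes is developed far enough to constitute a proof, and the first one (building a semistable reduction or an adapted global slicing for $\sigma_t$) is not what is needed and is not obviously available.

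The paper's actual argument is close in spirit to your second alternative but uses a different filtration and a concrete computation you do not supply. One takes the Harder--Narasimhan filtration of $E$ with respect to $\mu_1$ (not the slicing $\mathcal{P}_S$), with factors $Q_k$. The $\sigma_t$-semistability of $E$ together with $b_r(E)>0$ forces $b_r(Q_k)>0$ for all $k$, each factor satisfies $b_r(Q_k)a_{r-1}(Q_k)-b_{r-1}(Q_k)a_r(Q_k)\geq 0$ by Lemma \ref{second quadratic inequality}, and $\sigma_t$-semistability gives two-sided bounds on the partial sums $\sum_{k\leq j}(-a_r(Q_k)+tb_{r-1}(Q_k))$. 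Two applications of Abel summation then yield
$$b_r(E)a_{r-1}(E)-a_r(E)b_{r-1}(E)\ \geq\ \frac{1}{t}\sum_{1\leq i<j\leq l}\Bigl(\frac{a_r(Q_i)}{\sqrt{b_r(Q_i)}}\sqrt{b_r(Q_j)}-\frac{a_r(Q_j)}{\sqrt{b_r(Q_j)}}\sqrt{b_r(Q_i)}\Bigr)^2\ \geq\ 0,$$
which is exactly the ``cross terms carry the correct sign'' statement you leave unverified. Moreover, your plan for part (2) --- an induction on $i$ repeating the decomposition at each stage --- is not how the paper proceeds and would be considerably harder: in the paper, equality in (1) forces all the ratios $a_r(Q_k)/b_r(Q_k)$ to coincide, which contradicts the HN property unless $l=1$; hence $E$ is already $\mu_1$-semistable and (2) follows in one stroke from Lemma \ref{second quadratic inequality}. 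Without the Abel-summation computation and this equality analysis, your proposal does not close.
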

	
	\begin{proof}

	As in the proof of Lemma \ref{second quadratic inequality}, we can assume that $b_r(E)>0$. Moreover, if $t=0$, this is  Lemma \ref{second quadratic inequality}. Hence we can also assume $t>0$. 
	
	The following proof is essentially the same as the proof in \cite[Lemma 5.5]{Stabilityconditionsonproductvarieties}, we include the details for reader's convenience.  Take the HN filtration of $E$ with respect to $\mu_1$, we get the sequence  $$0=E_0\subset E_1 \subset E_2\subset \cdots \subset E_{l-1} \subset E_l=E,$$ and use $Q_k$ to denote $E_{k}/E_{k-1}$ for $1\leq k\leq l$. 
	
	 Since $E$ is $\sigma_t$ semi-stable and $b_r(E)$ is positive, we get  $b_r(Q_1)>0$. Furthermore, by the definition of HN filtration, we have
	 
	 \begin{equation}
	 \frac{-a_r(Q_1)}{b_r(Q_1)}>\frac{-a_r(Q_2)}{b_r(Q_2)}>\cdots>\frac{-a_r(Q_l)}{b_r(Q_l)}.
	 \end{equation} 
	 
	Hence, we get $b_r(Q_k)>0$ for any $1\leq k\leq l$.  Also, by Lemma \ref{second quadratic inequality}, we have \begin{equation}
	  b_r(Q_k)a_{r-1}(Q_k)-b_{r-1}(Q_k)a_r(Q_k)\geq 0
	  \end{equation} for any $1\leq k\leq l$. 
	  
	  The last piece of data is that $E$ is $\sigma_t$ semi-stable. Hence \begin{equation}
	  \frac{\sum\limits_{k=1}^j(-a_r(Q_k)+tb_{r-1}(Q_k))}{\sum\limits_{k=1}^jb_r(Q_k)}\leq \frac{-a_r(E)+tb_{r-1}(E)}{b_r(E)}\leq \frac{\sum\limits_{k=j}^l(-a_r(Q_k)+tb_{r-1}(Q_k))}{\sum\limits_{k=j}^lb_r(Q_k)}
	  \end{equation} for any $1\leq j\leq l$.
	 
	 We can prove that \begin{equation*}
	\begin{split}
&	a_{r-1}(E)\geq \sum\limits_{k=1}^l \frac{a_r(Q_k)b_{r-1}(Q_k)}{b_r(Q_k)} \\ & =\frac{a_r(Q_l)}{b_r(Q_l)}b_{r-1}(E)-\sum\limits_{j=1}^{l-1}\sum\limits_{k=1}^jb_{r-1}(Q_k)(\frac{a_r(Q_{j+1})}{b_r(Q_{j+1})}-\frac{a_r(Q_j)}{b_r(Q_j)} )\\ & \geq \frac{a_r(Q_l)}{b_r(Q_l)}b_{r-1}(E)-\frac{1}{t}\sum\limits_{j=1}^{l-1}\sum\limits_{k=1}^j(\frac{-a_r(E)+tb_{r-1}(E)}{b_r(E)}b_r(Q_k)+a_r(Q_k))(\frac{a_r(Q_{j+1})}{b_r(Q_{j+1})}-\frac{a_r(Q_j)}{b_r(Q_j)} )\\ &  =\frac{1}{t}\sum\limits_{k=1}^l\frac{a_r(Q_k)}{b_r(Q_k)}(\frac{-a_r(E)+tb_{r-1}(E)}{b_r(E)}b_r(Q_k)+a_r(Q_k)) \\& =\frac{1}{t}(a_r(E)\frac{-a_r(E)+b_{r-1}(E)t}{b_r(E)}+\sum\limits_{k=1}^l\frac{a_r(Q_k)^2}{b_r(Q_k)}).
	\end{split}
	 \end{equation*}
	 
	 	The first inequality is from (2) and the fact $b_r(Q_k)>0$, the first equality is Abel's summation formula. The second inequality comes from (1) and the left side of (3). The second equality is Abel's summation formula. Hence, we have the following inequality
	 
	 	\begin{equation*}
		\begin{split}
		b_r(E)a_{r-1}(E)-a_r(E)b_{r-1}(E) &\geq \frac{1}{t}(\sum\limits_{k=1}^lb_r(Q_k)\sum\limits_{k=1}^l\frac{a_r(Q_k)^2}{b_r(Q_k)}-a_r(E)^2)\\ & =\frac{1}{t}\sum\limits_{1\leq i<j\leq l}(\frac{a_r(Q_i)}{\sqrt{b_r(Q_i)}}\sqrt{b_r(Q_j)}-\frac{a_r(Q_j)}{\sqrt{b_r(Q_j)}}\sqrt{b_r(Q_i)})^2.
		\end{split}	
		\end{equation*}
		
		Hence $b_r(E)a_{r-1}(E)-a_r(E)b_{r-1}(E)\geq 0$, and the equality holds only if $$\frac{a_r(Q_i)}{\sqrt{b_r(Q_i)}}\sqrt{b_r(Q_j)}=\frac{a_r(Q_j)}{\sqrt{b_r(Q_j)}}\sqrt{b_r(Q_i)}$$  for any $1\leq i,j\leq l$, which is equivalent to $$\frac{a_r(Q_i)}{b_r(Q_i)}=\frac{a_r(Q_j)}{b_r(Q_j)}$$ any $1\leq i,j\leq l$. This contradicts the definition of HN-filtration unless $l=1$, or equivalently, $E$ is semi-stable with respect to $\mu_1$. Therefore, Lemma \ref{second quadratic inequality} implies (2).
	\end{proof}
	\section{Quadratic inequalities}\label{Applications}

	Let $X$ be a smooth projective variety of positive dimension $d$, and let $H$ be an ample divisor on $X$. Then the  pair $\sigma_H=(coh(X), -H^{d-1}ch_1(E)+iH^dch_0(E))$ is a weak stability condition on $D(X)$.

		Applying the construction and results in Section \ref{Preliminary results} and Section \ref{quadratic inequalitties}, we get two sequences of quadratic inequalities.
		
		\begin{theorem}\label{Bogomolov's inequality}
			
			Let $X, S$ be smooth projective varieties of positive dimension $d, r$ respectively, and $p,q$ be the projections from $X\times S$ to $X$ and $S$ respectively. Suppose we have $$H_1=c_1(p^*\mathcal{O}_X(1)), H_2=c_1(q^*\mathcal{O}_S(1)),$$ where $\mathcal{O}_X(1),\mathcal{O}_S(1)$ are ample line bundles on $X$ and $S$ respectively. 
			Then if $E$ is semi-stable sheaf with respect to $m_1H_1+m_2H_2$, where $m_1,m_2\in\mathbb{Z}_{>0}$ are two positive integers, we have the following inequalities.
			
			(1) $b_r(E)a_{r-1}(E)-b_{r-1}(E)a_r(E)\geq 0$.

			(2) For any $1\leq i\leq r-1$, if $$b_r(E)a_{r-1}(E)-b_{r-1}(E)a_r(E)=\cdots=b_r(E)a_{i}(E)-b_{i}(E)a_r(E)=0,$$ then $b_r(E)a_{i-1}(E)-b_{i-1}(E)a_r(E)\geq 0$.

			Here $$a_k(E)=-\frac{1}{k!}H_1^{d-1}H_2^k\cdot \sum\limits_{\substack{i+j=r+1-k \\ i,j\geq 0}} td_ich_j(E),$$ $$b_k(E)=\frac{1}{k!}H_1^{d}H_2^k\cdot \sum\limits_{\substack{i+j=r-k\\ i,j\geq 0}} td_ich_j(E)$$  for any $0\leq k\leq r$, and $td_i$ denotes the i-th Todd class of the relative tangent bundle of $p$.
			
			(3)  $b'_d(E)a'_{d-1}(E)-b'_{d-1}(E)a'_d(E)\geq 0$.
			
				(4)For any $1\leq i\leq d-1$, if $$b'_d(E)a'_{d-1}(E)-b'_{d-1}(E)a'_d(E)=\cdots=b'_d(E)a'_{i}(E)-b'_{i}(E)a'_d(E)=0,$$ then $b'_d(E)a'_{i-1}(E)-b'_{i-1}(E)a'_d(E)\geq 0$.

				Here $$a_k'(E)=-\frac{1}{k!}H_1^{k}H_2^{r-1}\cdot \sum\limits_{\substack{i+j=d+1-k \\ i,j\geq 0}} td'_ich_j(E),$$ $$b_k'(E)=\frac{1}{k!}H_1^{k}H_2^r\cdot \sum\limits_{\substack{i+j=d-k\\ i,j\geq 0}} td'_ich_j(E)$$ for any $0\leq k\leq d$, and $td'_i$ denotes the i-th Todd class of the relative tangent bundle of $q$.
		\end{theorem}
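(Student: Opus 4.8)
The plan is to recognize Theorem \ref{Bogomolov's inequality} as the instance of Theorem \ref{third quadratic inequality} attached to the slope weak stability condition $\sigma_H=(\mathrm{coh}(X),\,-H^{n-1}ch_1+iH^nch_0)$, whose global heart is simply $\mathcal{A}_S=\mathrm{coh}(X\times S)$ (by the Remark after the Abramovich--Polishchuk theorem, since here $\mathcal{A}=\mathrm{coh}(X)$). The first task is to make the coefficients $a_k,b_k$ explicit. For $m\gg 0$ one has $L_E(m)=Z(\mathbf{R}p_*(E\otimes q^*\mathcal{O}(m)))$, so Grothendieck--Riemann--Roch gives $ch(\mathbf{R}p_*(E\otimes q^*\mathcal{O}(m)))=p_*\bigl(ch(E)\,e^{mH_2}\,td(T_p)\bigr)$, where $T_p$ is the relative tangent bundle of $p$. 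Taking the imaginary and real parts of $Z$ amounts, via the projection formula $\int_X H^{\bullet}\,p_*(-)=\int_{X\times S}H_1^{\bullet}\,(-)$, to pairing the codimension-$r$ (resp.\ codimension-$(r+1)$) part of $ch(E)\,e^{mH_2}\,td(T_p)$ against $H_1^{n}$ (resp.\ $H_1^{n-1}$). Extracting the coefficient of $m^k$ then reproduces exactly the stated formulas for $a_k(E)$ and $b_k(E)$; this is a direct expansion that I would not belabor.

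The heart of the argument is to transfer semistability: I must show that a sheaf on $X\times S$ which is slope-semistable with respect to $A:=m_1H_1+m_2H_2$ is semistable for the abstract weak stability condition $\sigma_t=(\mathcal{A}_S,\,a_r-t\,b_{r-1}+ib_r)$ of Theorem \ref{third quadratic inequality}, for a suitable $t\ge 0$. Since both stabilities live on the same heart $\mathrm{coh}(X\times S)$, it suffices to check that the two slope functions differ by an increasing affine transformation. Using that $H_1^{a}H_2^{b}=0$ unless $a\le n$ and $b\le r$, only $(a,b)=(n,r)$ survives in $A^{n+r}$, and only $(a,b)\in\{(n-1,r),(n,r-1)\}$ survive in $A^{n+r-1}$; hence $A^{n+r}ch_0(E)$ and $A^{n+r-1}ch_1(E)$ are expressed through the formulas above as explicit linear combinations of $a_r(E)$, $b_{r-1}(E)$ and $b_r(E)$, together with a rank-proportional Todd correction coming from $td_1(T_p)=\tfrac12 q^*c_1(T_S)$. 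Comparing binomial coefficients, I expect to obtain $\mu_A=\lambda\,\mu_{\sigma_t}+c$ with $\lambda>0$ and $$t=\frac{m_1}{n\,m_2}>0,$$ so that the orderings of subobjects coincide and $\sigma_A$-semistability implies $\sigma_t$-semistability.

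Granting the identification of coefficients and this slope comparison, parts (1) and (2) follow immediately from Theorem \ref{third quadratic inequality} applied to $\sigma_t$ (which is a genuine weak stability condition precisely because $t\ge 0$, as noted before that theorem). For parts (3) and (4) I would invoke the symmetry of the entire construction under interchanging the two factors: replacing $(X,p,H_1,n)$ by $(S,q,H_2,r)$ and swapping $m_1\leftrightarrow m_2$ leaves $A=m_1H_1+m_2H_2$ unchanged and turns $a_k,b_k,td_i$ into the primed quantities $a_k',b_k',td_i'$ built from the relative tangent bundle of $q$. The identical chain of lemmas then yields (3) and (4), now with parameter $t'=m_2/(r\,m_1)>0$.

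The step I expect to be delicate is the slope comparison in the second paragraph, specifically the bookkeeping of the Todd contribution. One must observe that the Todd term drops out of $a_r$ because $H_2^{r}\,q^*c_1(T_S)=q^*(H_S^{r}c_1(T_S))=0$ for dimension reasons on $S$, whereas the analogous term in $b_{r-1}$ genuinely survives; one then verifies that, after dividing by $A^{n+r}ch_0(E)\propto b_r(E)\propto ch_0(E)$, this surviving term collapses to a constant independent of $E$ and so only shifts the slope. Once this is confirmed, the positivity $\lambda>0$ and $t\ge 0$ are exactly what is needed to feed $E$ into Theorem \ref{third quadratic inequality}.
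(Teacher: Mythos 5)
Your proposal is correct and follows essentially the same route as the paper's own proof: compute $a_k,b_k$ via Grothendieck--Riemann--Roch and the projection formula, observe that $H_2^r\,td_1=0$ kills the Todd term in $a_r$ while the one in $b_{r-1}$ only contributes an $E$-independent shift of the slope, match $\mu_{\sigma_t}$ with $\mu_{m_1H_1+m_2H_2}$ for $t=\tfrac{m_1}{n m_2}$ via the binomial expansion of $(m_1H_1+m_2H_2)^{n+r-1}$ and $(m_1H_1+m_2H_2)^{n+r}$, invoke Theorem \ref{third quadratic inequality}, and get (3),(4) by exchanging the roles of $X$ and $S$. No gaps.
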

		
		\begin{proof}By the symmetry of $X$ and $S$, we only need to prove (1) and (2). 
		Firstly, the global heart $\mathcal{A}_S$ is $coh(X\times S)$ since $\mathcal{A}=coh(X)$. 
		
		We need to calculate the coefficients of the polynomial $L_E(n)$ for $E\in\mathcal{A}_S$. To calculate the polynomial $L_E(n)$, we need to calculate the Chern characters of $\textbf{R}p_*(E\times q^*(\mathcal{O}(n)))$. The calculation can be done by using Gronthendieck-Riemann-Roch formula:
		
		\begin{equation*}
			\begin{split}
		&	ch(\textbf{R}p_*(E\otimes q^*\mathcal{O}_S(n)))  =p_*(ch(E)ch(q^*\mathcal{O}_S(n))td(T_p)) \\ & =p_*((ch_0(E),ch_1(E),\cdots,ch_{d+r}(E))(1,nH_2, \cdots, \frac{n^r}{r!}H_2^r,0\cdots,0)(1,td_1,\cdots,td_r,0,\cdots, 0))
			\\&=p_*((ch_0(E),\cdots,\ \sum\limits_{\substack{i+j=r \\i, j \geq 0}}ch_i(E)\frac{n^j}{j!}H_2^j, \cdots, \sum\limits_{\substack{i+j=d+r \\i,j\geq 0}}ch_i(E)\frac{n^j}{j!}H_2^j)(1,td_1,\cdots,td_r,0\cdots,0))
		\\&=(p_*(\sum\limits_{\substack{i+j+k=r \\i, j,k\geq 0}}ch_i(E)\frac{n^j}{j!}H_2^jtd_k),p_*(\sum\limits_{\substack{i+j+k=r+1 \\i, j,k \geq 0}}ch_i(E)\frac{n^j}{j!}H_2^jtd_k),\cdots)
			\end{split}
				\end{equation*}
				
				Apply the weak stability function $-H^{d-1}ch_1+i\cdot H^d ch_0$, we get 
				
				$$L_E(n)=-H_1^{d-1}\sum\limits_{\substack{i+j+k=r+1 \\i, j,k \geq 0}}ch_i(E)\frac{n^j}{j!}H_2^jtd_k+i\cdot H_1^d \sum\limits_{\substack{i+j+k=r \\i, j,k\geq 0}}ch_i(E)\frac{n^j}{j!}H_2^jtd_k$$ by projection formula.
				
				Hence, the coefficients $a_k, b_k$ in the polynomial $L_E(n)$ can be written as 
				 $$a_k(E)=-\frac{1}{k!}H_1^{d-1}H_2^k\cdot \sum\limits_{\substack{i+j=r+1-k \\ i,j\geq 0}} td_ich_j(E),$$ $$b_k(E)=\frac{1}{k!}H_1^{d}H_2^k\cdot \sum\limits_{\substack{i+j=r-k\\ i,j\geq 0}} td_ich_j(E)$$ for any $1\leq k\leq r$.
				
				The last step is to show that there exists $t\in\mathbb{R}_{\geq 0}$, such that the weak stability condition $\sigma_t=(\mathcal{A}_S, a_r-tb_{r-1}+ib_r)$ is equivalent to the slope stability with respect to $m_1H_1+m_2H_2$.  
				Since we have $$a_r(E)=-\frac{1}{r!}H_1^{d-1}H_2^r\cdot(td_1ch_0(E)+ch_1(E))=-\frac{1}{r!}H_1^{d-1}H_2^r\cdot ch_1(E),$$
				$$b_r(E)=\frac{1}{r!}H_1^{d}H_2^r\cdot ch_0(E),$$
				
				$$b_{r-1}=\frac{1}{(r-1)!}H_1^{d}H_2^{r-1}\cdot(td_1ch_0(E)+ch_1(E)).$$ The slope function is \begin{equation*}
				\begin{split}
				\mu_{\sigma_t}(E)&=\frac{-a_r(E)+tb_{r-1}(E)}{b_r(E)} \\ &=\frac{H_1^{d-1}H_2^r\cdot ch_1(E)+t \cdot rH_1^dH_2^{r-1}(ch_0(E)td_1+ch_1(E))}{H_1^dH_2^r\cdot ch_0(E)} \\ &=\frac{(H_1^{d-1}H_2^r+t\cdot rH_1^dH_2^{r-1})ch_1(E)}{H_1^dH_2^r\cdot ch_0(E)}+\frac{t\cdot rH_1^dH_2^{r-1}td_1}{H_1^dH_2^r}.
				\end{split}
				\end{equation*}
				The last term in the last line is independent of $E$, so it will not affect the stability of $E$. If we take $t=\frac{m_1}{m_2d}$, we can easily check 
				\begin{equation*}
				\begin{split}
				H_1^{d-1}H_2^r & +t\cdot rH_1^dH_2^{r-1}=H_1^{d-1}H_2^r+\frac{m_1r}{m_2d}H_1^dH_2^{r-1}\\&=\frac{1}{\binom{d+r-1}{r}m_1^{d-1}m_2^r}(\binom{d+r-1}{r}m_1^{d-1}m_2^rH_1^{d-1}H_2^r+\binom{d+r-1}{d}m_1^dm_2^{r-1}H_1^d
				H_2^{r-1})\\&=\frac{1}{\binom{d+r-1}{r}m_1^{d-1}m_2^r}(m_1H_1+m_2H_2)^{d+r-1},
				\end{split}
				\end{equation*}
				and \begin{equation*}
				\begin{split}
				H_1^dH_2^r=\frac{1}{\binom{d+r}{r}m_1^dm_2^r}(m_1H_1+m_2H_2)^{d+r}.
				\end{split}
				\end{equation*}
				
				Hence  $\mu_{\sigma_t}(E)$ is equivalent to the slope function $$\mu_{m_1H_1+m_2H_2}(E)=\frac{(H_1+H_2)^{d+r-1}ch_1(E)}{(H_1+H_2)^{d+r}ch_0(E)}.$$ 
				
			The proof is complete.
		\end{proof}
		\begin{remark}
		In fact, the inequalities in (1) and (3) are the same, we state in this way for the symmetry of the theorem.
		\end{remark}

	If  $S$ is an  abelian variety, the quadratic inequalities in (1) and (2) can be written in a simpler form since we get rid of the Todd classes.

	\begin{corollary}
Let $X$ be a smooth projective variety of dimension $d>0$,  $A$ be an abelian variety of dimension $r>0$.   Let $p,q$ be the projections from $X\times A$ to $X$ and $A$ respectively. Suppose we have $$H_1=c_1(p^*\mathcal{O}_{X}(1)), H_2=c_1(q^*\mathcal{O}_{A}(1)),$$ where $\mathcal{O}_{X}(1),\mathcal{O}_{A}(1)$ are ample line bundles on $X$ and $A$ respectively. 
Then if $E$ is semi-stable sheaf with respect to  $m_1H_1+m_2H_2$, where $m_1,m_2\in\mathbb{Z}_{>0}$ are two positive integers, we have the following inequalities.

	(1) $b_r(E)a_{r-1}(E)-b_{r-1}(E)a_r(E)\geq 0$.

	(2) For any $1\leq i\leq r-1$, if $$b_r(E)a_{r-1}(E)-b_{r-1}(E)a_r(E)=\cdots=b_r(E)a_{i}(E)-b_{i}(E)a_r(E)=0,$$ then $b_r(E)a_{i-1}(E)-b_{i-1}(E)a_r(E)\geq 0$.

	Here $$a_k(E)=-H_1^{d-1}H_2^k\cdot ch_{r+1-k}(E),$$ $$b_k(E)=H_1^{d}H_2^k\cdot ch_{r-k}(E)$$  for any $0\leq k\leq r$.
	\end{corollary}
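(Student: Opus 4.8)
The plan is to obtain this corollary as the direct specialization of Theorem \ref{Bogomolov's inequality} to the case $S = A$, where the only quantities that simplify are the Todd classes $td_i$ of the relative tangent bundle $T_p$ of the first projection $p\colon X\times A\to X$. Since the general coefficient formulas in Theorem \ref{Bogomolov's inequality} already express $a_k, b_k$ through $\sum_{i+j} td_i\, ch_j(E)$, all that remains is to compute these Todd classes for an abelian fiber and to track the resulting normalization constants.

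First I would identify $T_p$. For the product $X\times A$ we have a splitting $T_{X\times A}\cong p^*T_X\oplus q^*T_A$, so the relative tangent bundle is $T_p\cong q^*T_A$. Because $A$ is an abelian variety its tangent bundle is trivial, $T_A\cong\mathcal{O}_A^{\oplus r}$, hence $T_p$ is trivial as well and $td(T_p)=1$; equivalently $td_0=1$ and $td_i=0$ for every $i\geq 1$. Substituting this into the sums of Theorem \ref{Bogomolov's inequality}, only the $i=0$ term survives in each $\sum_{i+j=r+1-k} td_i\, ch_j(E)$ and $\sum_{i+j=r-k} td_i\, ch_j(E)$, leaving $ch_{r+1-k}(E)$ and $ch_{r-k}(E)$ respectively. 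This yields
$$a_k(E)=-\tfrac{1}{k!}H_1^{n-1}H_2^k\cdot ch_{r+1-k}(E),\qquad b_k(E)=\tfrac{1}{k!}H_1^{n}H_2^k\cdot ch_{r-k}(E),$$
which coincide with the formulas in the corollary up to the positive scalar $1/k!$.

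Finally I would check that dropping the factors $1/k!$ does not alter any of the inequalities. The coefficients appearing in the corollary are exactly $k!$ times those of Theorem \ref{Bogomolov's inequality}, so each quadratic expression $b_r(E)a_{i-1}(E)-b_{i-1}(E)a_r(E)$ in the corollary equals the strictly positive constant $r!\,(i-1)!$ times the corresponding expression in the theorem. Consequently both the signs in part (1) and the vanishing hypotheses and conclusions in part (2) transfer verbatim. I do not expect any genuine obstacle here; the single point requiring care is the bookkeeping of these factorial normalizations, and since $r!\,(i-1)!>0$ every sign is preserved, so (1) and (2) follow immediately from Theorem \ref{Bogomolov's inequality}.
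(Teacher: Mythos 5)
Your proposal is correct and follows the same route as the paper, which simply derives the corollary as a direct specialization of Theorem \ref{Bogomolov's inequality}; you supply the details the paper leaves implicit, namely that $T_p\cong q^*T_A$ is trivial so $td(T_p)=1$, and that the dropped $1/k!$ normalizations only rescale each quadratic expression by the positive constant $r!\,(i-1)!$.
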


	\begin{proof}
		This follows directly from Theorem \ref{Bogomolov's inequality}.
	\end{proof}

		We end with the following example, which  illustrates why the leading quadratic inequality can be viewed as a weak version of Bogomolov's inequality.
		
		\begin{example}
			
				Let $X$ be a smooth projective curves over $\mathbb{C}$, $\sigma=(coh(X),-deg+i\cdot rank)$ be the classical slope stability condition on $D(X)$, and $S$ be a smooth projective curve of genus $g$. Suppose $E$ is an object in  the global heart $\mathcal{A}_S$, i.e., a coherent sheaf on  $X\times S$.  
				
			Let us denote the Chern characters of $E$ by $ch(E)=(r, n_1l_1+n_2l_2+\delta, v)$, where $l_1\in H^2(X,\mathbb{Z})\otimes H^0(S,\mathbb{Z})$, $l_2\in H^0(X,\mathbb{Z})\otimes H^2(S,\mathbb{Z})$, $\delta\in H^1(X,\mathbb{Z})\otimes H^1(S,\mathbb{Z})$ and $n_1,n_2\in \mathbb{Z}$. Then the polynomial can be written as $$L_E(n)=-(v+(n+1-g)\cdot ch_1(E)l_2)+i(r\cdot (n+1-g)+ch_1(E)\l_1),.$$ Hence, we have $a_1(E)=-ch_1(E)l_2=n_1$, $a_0(E)=-v+(g-1)n_1$, $b_1(E)=r$, $b_0(E)=n_2+r-rg$. The first quadratic inequality in Theorem 5.1 in this case becomes \begin{equation*}
				\begin{split}
				b_1(E)a_0(E)-a_1(E)b_0(E)&=(-v+(g-1)n_1)\cdot r+n_1\cdot (n_2+r-rg)\\ &=ch_1(E)l_1\cdot ch_1(E)l_2-rv\\ &= \frac{1}{2}(ch_1(E)^2-\delta^2-2ch_0(E)ch_2(E))\geq 0.
				\end{split}
				\end{equation*} This is equivalent to Bogomolov's inequality if $\delta=0$. In general, it is a weak version since $\delta^2\leq 0$ by Hodge index theorem.
			
		\end{example}

\bibliographystyle{alpha}
\bibliography{bibfile}

\end{document}